\theoremstyle{plain}
\newtheorem{thm}{Theorem}
\newtheorem{cor}[thm]{Corollary}
\newtheorem{fact}[thm]{Fact}
\newtheorem{lem}[thm]{Lemma}
\newtheorem{prop}[thm]{Proposition}
\newtheorem{ques}{Question}
\newtheorem*{thmA}{Theorem A}
\newtheorem*{corB}{Corollary B}
\newtheorem*{corC}{Corollary C}
\newtheorem*{thmD}{Theorem D}
\theoremstyle{remark}
\newtheorem{rem}[thm]{Remark}
\theoremstyle{definition}
\newtheorem{example}{Example}
\numberwithin{thm}{section}
\numberwithin{example}{section} 
\numberwithin{equation}{section}
\DeclareMathOperator{\ccd}{cd}
\DeclareMathOperator{\Gal}{Gal}
\DeclareMathOperator{\Ext}{Ext}
\DeclareMathOperator{\Tor}{Tor}
\DeclareMathOperator{\Hom}{Hom}
\DeclareMathOperator{\diag}{diag}
\DeclareMathOperator{\op}{op}
\DeclareMathOperator{\gr}{gr}
\DeclareMathOperator{\rk}{rk}
\DeclareMathOperator{\ob}{ob}
\DeclareMathOperator{\ord}{ord}
\DeclareMathOperator{\ab}{ab}
\newcommand{\Cl}{\mathrm{Cl}}
\DeclareMathOperator{\image}{im}
\DeclareMathOperator{\kernel}{ker}
\DeclareMathOperator{\iid}{id}
\DeclareMathOperator{\Repa}{Re}
\newcommand{\F}{\mathbb F}
\newcommand{\N}{\mathbb N}
\newcommand{\Z}{\mathbb Z}
\newcommand{\C}{\mathbb C}
\newcommand{\R}{\mathbb R}
\newcommand{\Q}{\mathbb Q}
\newcommand{\bA}{\mathbf{A}}
\newcommand{\boT}{\mathbf{T}}
\newcommand{\bL}{\mathbf{L}}
\newcommand{\caU}{\mathcal{U}}
\newcommand{\dbr}{]\!]}
\newcommand{\dbl}{[\![}
\newcommand{\bor}{\mathbf{r}}
\newcommand{\Amod}{{}_{\bA}\mathbf{mod}}
\newcommand{\Atf}{{}_{\bA}\mathbf{mod}^{\mathrm{ft}}}
\newcommand{\bp}{\mathbf{p}}
\newcommand{\euX}{\mathfrak{X}}
\newcommand{\euY}{\mathfrak{Y}}
\newcommand{\caE}{\mathcal{E}}
\newcommand{\caP}{\mathcal{P}}
\newcommand{\caO}{\mathcal{O}}
\newcommand{\caM}{\mathcal{M}}
\newcommand{\boh}{\mathbf{h}}
\newcommand{\eps}{\varepsilon}
\newcommand{\der}{\partial}
\newcommand{\tchi}{\tilde{\chi}}
\newcommand{\brho}{\bar{\rho}}
\newcommand{\argu}{\hbox to 7truept{\hrulefill}}
\begin{document}
\title[Graded Lie algebras of type FP]{Graded Lie algebras of type FP}
\author{Th. Weigel}
\date{\today}
\address{Universit\`a di Milano-Bicocca\\
U5-3067, Via R.Cozzi, 53\\
20125 Milano, Italy}
\email{thomas.weigel@unimib.it}

\begin{abstract}
It will be shown that every $\N$-graded Lie algebra generated in degree $1$ of type FP
with entropy less or equal to 1 must be finite-dimensional (cf. Thm.~A). As a consequence
every Koszul Lie algebra with entropy less or equal to $1$ must be abelian (cf. Cor.~C).
These results are obtained
from a generalized Witt formula (cf. Thm.~D) for $\N$-graded Lie algebras of type FP
and the analysis of necklace polynomials at roots of unity. 
\end{abstract}

\subjclass[2010]{Primary 17B70; secondary 16P90, 16S37, 17B60}

\maketitle

\section{Introduction}
\label{s:intro}
Several deep results in group theory relate
certain growth phenomena to the structure theory of a group,
e.g., M.~Gromov's celebrated theorem states that a finitely generated
group has polynomial word growth if, and only if, it is virtually nilpotent
(cf. \cite{grom:growth}); while A.~Lubotzky and A.~Mann showed  
that a finitely generated pro-$p$ group has polynomial subgroup growth
if, and only if, it is $p$-adic analytic (cf. \cite{luma:growth}).
In a second paper together with D.~Segal they achieved the beautiful result
that a finitely generated residually finite (discrete) 
group has polynomial subgroup growth if, and only if,
it is virtually soluble of finite rank (cf. \cite{lumase:growth}).

The main purpose of this paper is to establish an analogue of the just mentioned
results in the context of $\N$-graded $\F$-Lie algebras 
which are generated in degree $1$ and which are of finite type.
Following \cite{szsh:ent} for such a Lie algebra 
$\bL=\coprod_{k\geq 1}\bL_k$ one calls the number
\begin{equation}
\label{eq:entdef}
\boh(\bL)=\textstyle{\limsup_{n\to\infty}\sqrt[n]{\dim(\bL_n)}\in\R_{\geq 1}\cup\{0,\infty\}}
\end{equation}
the {\it entropy} of $\bL$. Moreover, a Lie algebra
$\bL$ is called to be {\it of type FP$_\infty$},
if the trivial left $\bL$-module $\F$ has a projective resolution $(P_\bullet,\der_\bullet,\eps)$,
where $P_k$ is a finitely generated projective left $\caU(\bL)$-module for all $k$. Here $\caU(\bL)$
denotes the {\it universal enveloping algebra} of $\bL$. 
The Lie algebra $\bL$ is said to be of {\it finite cohomological dimension} $\ccd(\bL)<\infty$,
if $\F$ has a projective resolution $(P_\bullet,\der_\bullet,\eps)$ of finite length,
i.e., there exists a positive integer $n$ such that $P_k=0$ for all $k\geq n$.
If $\bL$ is of type FP$_\infty$ and of finite cohomological dimension, $\bL$ is called to
be {\it of type FP} (cf. \cite[Chap.~VIII.6]{brown:coho}). The main result of this paper
can be formulated as follows (cf. Thm.~\ref{thm:grhU1}).

\begin{thmA}
Let $\bL$ be an $\N$-graded Lie algebra  
generated in degree $1$ and of type FP satisfying $\boh(\bL)\leq 1$.
Then $\bL$ is finite-dimensional and nilpotent, i.e., $\boh(\bL)=0$.
\end{thmA}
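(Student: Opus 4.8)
The plan is to compress all the hypotheses into a single rigidity statement about the Poincaré series of $\caU(\bL)$, and then to read off finiteness from the size of necklace polynomials at points of modulus $\le 1$. First I would exploit type FP: since $\caU(\bL)$ is $\N$-graded connected, $\F$ admits a \emph{minimal} finite free resolution $(P_\bullet,\der_\bullet,\eps)$ of length $\ccd(\bL)$ with $P_0=\caU(\bL)$ and each $P_i$ ($i\ge 1$) a finite sum of shifts $\caU(\bL)(-d_{ij})$ with $d_{ij}\ge 1$ (generation in degree $1$ forces the shifts into $\N$). Taking the Euler characteristic of Hilbert series gives
\[
P(t)\cdot H(\caU(\bL),t)=1,\qquad P(t)=1+\sum_{i\ge 1}(-1)^i\sum_j t^{d_{ij}}\in\Z[t],
\]
while the Poincar\'e--Birkhoff--Witt theorem gives $H(\caU(\bL),t)=\prod_{k\ge 1}(1-t^k)^{-\dim\bL_k}$. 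Writing $P(t)=\prod_{i=1}^{D}(1-\beta_i t)$ over $\C$ with $D=\deg P$ and comparing logarithmic derivatives, Möbius inversion then yields the generalized Witt formula (this is the content of Theorem~D)
\[
\dim\bL_n=\sum_{i=1}^{D}M(n,\beta_i),\qquad M(n,x)=\tfrac1n\sum_{d\mid n}\mu(n/d)\,x^d .
\]

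Next I would pin down the inverse roots $\beta_i$. The hypothesis $\boh(\bL)\le 1$ says precisely that $\sum_k\dim(\bL_k)\,s^k<\infty$ for every $s\in[0,1)$; using $-\log(1-x)\le x/(1-x)$ one bounds $\log H(\caU(\bL),s)=\sum_k\dim(\bL_k)\bigl(-\log(1-s^k)\bigr)\le(1-s)^{-1}\sum_k\dim(\bL_k)s^k<\infty$. Hence the power series $1/P(t)=H(\caU(\bL),t)$ converges throughout the open unit disc, so (by the identity theorem) $P$ has no zero there, i.e. $|\beta_i|\le 1$ for every $i$. Now the crude estimate $|M(n,x)|\le\tfrac1n\sum_{d\mid n}|\mu(n/d)|\,|x|^d\le\tau(n)/n$, valid for $|x|\le 1$, together with $\tau(n)=n^{o(1)}$, shows $M(n,\beta_i)\to 0$ for each fixed $i$; since the sum over $i$ is finite, $\dim\bL_n\to 0$. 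As the $\dim\bL_n$ are non-negative integers, $\dim\bL_n=0$ for all $n\ge N$ for some $N$, and since $\bL$ is generated in degree $1$ (so $\bL_n=[\bL_1,\bL_{n-1}]$) this forces $\bL=\coprod_{k<N}\bL_k$ to be finite-dimensional and nilpotent, with $\boh(\bL)=\limsup_n\sqrt[n]{\dim\bL_n}=0$.

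The Hilbert-series bookkeeping of the first step is routine; the only genuinely analytic input is the convergence estimate of the second step forcing $1/P$ to be holomorphic on the unit disc, and the heart of the matter is the necklace-polynomial identity of Theorem~D combined with the elementary fact that $M(n,x)=o(1)$ uniformly for $|x|\le 1$. I expect the main obstacle to lie in Theorem~D itself --- in particular, justifying the passage from the formal infinite product $\prod_k(1-t^k)^{\dim\bL_k}$ to the honest polynomial $P$ and controlling its factorization --- since once $|\beta_i|\le 1$ is established the finiteness conclusion is immediate.
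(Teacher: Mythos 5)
Your argument is correct, and while its skeleton (minimal finite free resolution, the identity $\chi_{\caU(\bL)}(y)\,h_{\caU(\bL)}(y)=1$, PBW, and the generalized Witt formula of Theorem~D) coincides with the paper's, the endgame is genuinely different and more elementary. The paper first shows that $\boh(\caU(\bL))=1$ forces every eigenvalue to be a \emph{root of unity} (Prop.~\ref{prop:lmax1}, a Kronecker-type argument using that the $\lambda_i$ are algebraic integers, units, and of absolute value $1$ at every archimedean place), and then computes the sums $C_k(m)=\sum_{\xi\in\Xi_m}M_k(\xi)$ exactly as convolutions of multiplicative arithmetic functions (Prop.~\ref{prop:Cs}), producing one explicit degree $k=1+\prod_{m\in\caM(\bL)}m$ with $\dim(\bL_k)=0$. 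You bypass both steps: you only need $|\lambda_i|\le 1$, which you extract directly from convergence of $h_{\caU(\bL)}$ on the open unit disc (your bound $-\log(1-x)\le x/(1-x)$ applied to the PBW product also replaces the paper's detour through Berezny\u\i's lemma, Prop.~\ref{prop:sum} and Robin's inequality for relating $\boh(\bL)$ to $\boh(\caU(\bL))$), and then the trivial estimate $|M_k(\lambda)|\le\tau(k)/k\to 0$ for $|\lambda|\le 1$ plus integrality of $\dim(\bL_k)$ forces eventual vanishing; generation in degree $1$ finishes as in the paper. What the paper's longer route buys is structural information that your argument does not provide: the eigenvalues are roots of unity, hence $\chi_{\bL}(y)=\prod_k\Phi_k(y)^{m_k}$ (Remark~\ref{rem:witt}(c)), which is what is actually used to upgrade ``finite-dimensional'' to ``abelian'' in the Koszul case (Prop.~\ref{prop:koslieen1}), together with an explicit degree of vanishing rather than a merely eventual one. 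All the individual steps you flag as routine are indeed routine, and the one you worry about (passing from the formal product to the polynomial identity) is exactly the content of Prop.~\ref{prop:char} and Fact~\ref{fact:propco}(c).
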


There are many examples of $\N$-graded Lie algebras of finite type which are generated in degree $1$
and whose entropy is equal to $1$, e.g., if $\bL$ is infinite dimensional and filiform (cf. \cite{mill:filiform})
one has $\dim(\bL_k)=1$ for $k\geq 2$ and therefore $\boh(\bL)=1$. 
The Lie algebras $\bL$ constructed in \cite{kkm:inter}
satisfy $\boh(\bL)=1$ and the series $(\dim(\bL_k))_{k\geq 1}$ has intermediate growth. 
As a consequence of Theorem A one obtains the following.

\begin{corB}
Let $\bL$ be an $\N$-graded finitely generated Lie algebra which is 
generated in degree $1$  satisfying $\boh(\bL)= 1$. Then either
$\ccd(\bL)=\infty$ or there exists $k\geq 2$ such that $\dim(H^k(\bL,\F))=\infty$.
\end{corB}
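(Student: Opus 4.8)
The plan is to derive Corollary~B from Theorem~A by contraposition. Assume $\bL$ is an $\N$-graded finitely generated Lie algebra, generated in degree~$1$, with $\boh(\bL)=1$, and suppose toward a contradiction that $\ccd(\bL)<\infty$ and that $\dim H^k(\bL,\F)<\infty$ for every $k\geq 2$. Since $\bL$ is finitely generated and generated in degree~$1$, the space $H^1(\bL,\F)$ of generators is finite-dimensional, and $H^0(\bL,\F)=\F$ is finite-dimensional as well. Hence under our assumption $\dim H^k(\bL,\F)<\infty$ for \emph{all} $k\geq 0$. The first step is therefore to translate this cohomological finiteness, together with $\ccd(\bL)<\infty$, into the statement that $\bL$ is of type~FP.

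The key step is the equivalence, for $\N$-graded Lie algebras generated in degree~$1$, between "$H^k(\bL,\F)$ finite-dimensional for all $k$" and "$\bL$ of type FP$_\infty$". This is the Lie-algebra analogue of the standard fact for groups that FP$_\infty$ over a field is detected by finite-dimensionality of homology with trivial coefficients (cf.\ \cite[Chap.~VIII]{brown:coho}); the grading makes the argument cleaner because each $H^k(\bL,\F)$ is itself graded with finite-dimensional graded pieces, and minimality of the graded free resolution of $\F$ over $\caU(\bL)$ forces the $k$-th term of that resolution to be free on a basis indexed by a homogeneous basis of $H^k(\bL,\F)$. Thus $\dim H^k(\bL,\F)<\infty$ for all $k$ gives finitely generated free modules in the minimal resolution, i.e.\ type~FP$_\infty$; combined with $\ccd(\bL)<\infty$ this yields type~FP. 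Once $\bL$ is of type~FP with $\boh(\bL)=1$, Theorem~A applies and gives $\boh(\bL)=0$, contradicting the hypothesis $\boh(\bL)=1$.

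I expect the construction of the minimal graded free resolution to be the main technical point: one must check that over $\caU(\bL)$, with its natural augmentation and $\N$-grading inherited from $\bL$, the augmentation ideal is contained in the graded Jacobson radical, so that Nakayama's lemma applies degreewise and a minimal resolution exists with $P_k$ free of rank $\dim H^k(\bL,\F)$. The finiteness of $\dim\bL_k$ in each degree (which follows since $\bL$ is finitely generated in degree~$1$) is what makes the graded pieces of $\caU(\bL)$ and of each $P_k$ finite-dimensional, so that "finitely generated" and "each graded piece finite-dimensional together with bounded generating degrees" behave as expected. With these finiteness bookkeeping matters in place, the rest is a formal deduction: assemble FP$_\infty$ and $\ccd(\bL)<\infty$ into FP, invoke Theorem~A, and read off the contradiction.
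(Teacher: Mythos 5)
Your proposal is correct and follows essentially the same route as the paper: the paper deduces Corollary~B from Theorem~A (via Corollary~\ref{cor:brez2}, ``$\bL$ is not of type FP'') together with the translation of type FP$_\infty$ and finite cohomological dimension into finite-dimensionality of the $H^k(\bL,\F)$ given by Proposition~\ref{prop:ident} and Fact~\ref{fact:propco}, which rest on exactly the minimal graded resolution $P_s=\bA\otimes\kernel(\der_{s-1})_{\bA}$ and graded Nakayama argument you describe. The finiteness bookkeeping you flag is already supplied by Fact~\ref{fact:onegen}(b) (finitely generated and generated in degree~$1$ implies finite type), so there is no gap.
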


An $\N$-graded Lie algebra $\bL$ will be said to be {\it Koszul},
if $\caU(\bL)$ is a Koszul algebra. Koszul algebras are a rather
mysterious class of $\N_0$-graded associative algebras,
and there are still many open questions.
Theorem~A provides an answer to one of these open questions (cf. \cite[Chap.~7.1, Conj.~3]{popo:quad})
in case that the Koszul algebra $\bA$
happens to be the universal enveloping algebra of an $\N$-graded Lie algebra
(cf. Prop.~\ref{prop:koslieen1}).

\begin{corC}
Let $\bL$ be a Koszul Lie algebra satisfying $\boh(\bL)\leq 1$.
Then $\bL$ is finite-dimensional and abelian.
\end{corC}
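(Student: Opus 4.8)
The plan is to deduce Corollary~C from Theorem~A. By definition $\bL$ is an $\N$-graded Lie algebra whose universal enveloping algebra $\bA:=\caU(\bL)$ is Koszul; in particular $\bA$ is quadratic and of finite type, i.e.\ $\N_0$-graded with $\bA_0=\F$, all $\bA_k$ finite-dimensional, and generated by $\bA_1$. By the Poincar\'e--Birkhoff--Witt theorem this forces $\bL=\coprod_{k\geq 1}\bL_k$ to be of finite type and generated in degree~$1$ (the Lie subalgebra generated by $\bL_1=\bA_1$ has universal enveloping algebra all of $\bA$, and a graded dimension count then identifies it with $\bL$). Granting Proposition~\ref{prop:koslieen1}, the hypothesis $\boh(\bL)\leq 1$ makes $\bL$ of type FP as well, so Theorem~A applies and yields that $\bL$ is finite-dimensional and nilpotent; it remains only to improve ``nilpotent'' to ``abelian''.

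I would carry out this last improvement with Koszul duality at the level of Hilbert series. Put $d:=\dim_\F(\bL)<\infty$ and let $\bA^!$ be the Koszul dual of $\bA$. Since $\bL$ is nilpotent it is unimodular, so Poincar\'e duality for Lie algebra cohomology gives $H^k(\bL,\F)=0$ for $k>d$ and $H^d(\bL,\F)\cong\F$; hence $\ccd(\bL)=d$. Because $\bA$ is Koszul, $\Ext^k_{\bA}(\F,\F)$ is concentrated in internal degree $k$ and equals $(\bA^!)_k\cong H^k(\bL,\F)$, so $\bA^!$ is finite-dimensional and $H_{\bA^!}(t)$ is a polynomial of degree exactly $d$ with leading coefficient $1$. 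On the other hand, the Koszul duality identity $H_{\bA}(t)\,H_{\bA^!}(-t)=1$ together with the graded Poincar\'e--Birkhoff--Witt identity $H_{\bA}(t)=\prod_{k\geq 1}(1-t^k)^{-\dim(\bL_k)}$ gives
\[
H_{\bA^!}(-t)=\prod_{k\geq 1}(1-t^k)^{\dim(\bL_k)},
\]
whose leading coefficient is $\pm 1$; hence $\deg H_{\bA^!}=\sum_{k\geq 1}k\dim(\bL_k)$. Comparing the two computations of $\deg H_{\bA^!}$ yields $\sum_{k\geq 1}k\dim(\bL_k)=d=\sum_{k\geq 1}\dim(\bL_k)$, and since $k\dim(\bL_k)\geq\dim(\bL_k)$ with equality precisely when $\dim(\bL_k)=0$ for all $k\geq 2$, it follows that $\bL=\bL_1$. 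As $\bL$ is generated in degree~$1$ one has $[\bL_1,\bL_1]\subseteq\bL_2=0$, so $\bL$ is abelian.

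The main obstacle, I expect, is Proposition~\ref{prop:koslieen1}: one must verify that a Koszul Lie algebra with $\boh(\bL)\leq 1$ is of type FP, i.e.\ has finite cohomological dimension. Koszulity alone only produces a linear, not a finite, minimal free resolution of $\F$, so this amounts to ruling out an infinite-dimensional Koszul dual $\bA^!$; and here the assumption $\boh(\bL)\leq 1$ must be used in earnest. The key is that $H_{\bA^!}(t)=\prod_{k\geq 1}\bigl(1-(-t)^k\bigr)^{\dim(\bL_k)}$ has non-negative coefficients while, for an infinite-dimensional $\bL$, the sub-exponential growth of $(\dim(\bL_k))_k$ is incompatible with such a product being a bona fide power series with non-negative coefficients --- precisely the kind of analysis of necklace polynomials at roots of unity that underlies Theorems~A and~D. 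Granting this proposition, Corollary~C reduces to Theorem~A together with the elementary degree count above.
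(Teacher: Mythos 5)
Your reduction to Theorem~A is the right strategy and your final step (finite-dimensional Koszul $\Rightarrow$ abelian) is correct, but the step you yourself flag as the ``main obstacle'' --- that a Koszul Lie algebra with $\boh(\bL)\leq 1$ is of type FP --- is a genuine gap, and the heuristic you offer for it would not work as stated. You cannot run the necklace-polynomial/roots-of-unity analysis before knowing that $\chi_{\caU(\bL)}(y)$ is a polynomial, which is exactly what finite cohomological dimension provides; so the claim that ``sub-exponential growth is incompatible with non-negativity of the coefficients of $h_{\caU(\bL)^!}$'' is circular in spirit and is in any case never carried out. Moreover, the entropy hypothesis is not where finiteness of $\ccd$ comes from. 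The paper's Fact~\ref{fact:koslie} gives it unconditionally: since $H^\bullet(\bL,\F)\simeq\caU(\bL)^!$ is generated in degree $1$, the inflation map $H^\bullet(\bL^{\ab},\F)\to H^\bullet(\bL,\F)$ is surjective, so $H^\bullet(\bL,\F)$ is a quotient of the exterior algebra $\Lambda(\bL_1^\ast)$ and vanishes in homological degrees above $\dim(\bL_1)$. Hence every Koszul Lie algebra of finite type satisfies $\ccd(\bL)\leq\dim(\bL_1)<\infty$, is of type FP, and is $\chi$-finite --- no growth condition needed. With that supplied, your appeal to Theorem~A is fine (after noting that $\boh(\bL)\leq 1$ forces $\boh(\caU(\bL))=1$, by Corollary~\ref{cor:brez1} and Proposition~\ref{prop:lmaxent}).

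Your derivation of ``abelian'' from ``finite-dimensional'' is correct and genuinely different from the paper's. You compare $\deg h_{\caU(\bL)^!}=\ccd(\bL)=\dim(\bL)$ (via Poincar\'e duality for the unimodular, because nilpotent, Lie algebra $\bL$) with $\deg h_{\caU(\bL)^!}=\sum_{k\geq 1} k\cdot\dim(\bL_k)$ (via PBW and $\chi_{\caU(\bL)}(y)\cdot h_{\caU(\bL)}(y)=1$), forcing $\dim(\bL_k)=0$ for $k\geq 2$. The paper instead factors $\chi_{\bL}(y)=\prod_{k\geq 1}\Phi_k(y)^{m_k}$ with $m_k=\sum_{j\geq 1}\dim(\bL_{jk})$ and invokes Descartes' rule of signs (Fact~\ref{fact:desrule}) to exclude $-1$ as an eigenvalue, giving $m_2=0$, hence $\bL_2=0$ and abelianness by degree-one generation. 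Both are short; yours trades the sign analysis for Poincar\'e duality and yields $\bL_k=0$ for all $k\geq 2$ in one stroke.
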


An $\N$-graded Lie algebra $\bL$ of type FP has 
a {\it characteristic polynomial} (cf. \eqref{eq:charp1})
\begin{equation}
\label{eq:charpol}
\chi_{\bL}(y)=\textstyle{\prod_{1\leq i\leq n}(1-\lambda_i y)\in\C[y]}
\end{equation}
which depends entirely on the cohomology of $\bL$.
The complex numbers $\lambda_i$ will be called the 
{\it eigenvalues} of $\bL$.
For such a Lie algebra one has the following
generalized Witt formula (cf. Thm.~\ref{thm:witt}).

\begin{thmD}
Let $\bL$ be an $\N$-graded Lie algebra of type FP, and let
$\lambda_1,\ldots,\lambda_n\in\C$ be the eigenvalues of $\bL$.
Then
\begin{equation}
\label{eq:genWittForm}
\dim(\bL_k)=\textstyle{\sum_{1\leq i\leq n} M_k(\lambda_i),}
\end{equation} 
where 
$\mu\colon\N\to\Z$ is the M\"obius function and
$M_k(y)=\frac{1}{k}\sum_{j|k} \mu(k/j)\cdot y^j\in\Q[y]$ denotes the
necklace polynomial of degree $k$.
\end{thmD}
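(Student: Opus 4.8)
The plan is to derive \eqref{eq:genWittForm} by comparing two descriptions of the Hilbert series $H(t)=\sum_{m\ge 0}\dim(\caU(\bL)_m)\,t^m$ of the universal enveloping algebra, regarded as a formal power series in $\Q\dbl t\dbr$; here $\caU(\bL)$ carries the $\N_0$-grading induced from the $\N$-grading of $\bL$, and each of its graded pieces is finite-dimensional because $\bL$ is of type FP (which forces $\dim(\bL_k)<\infty$ for every $k$, as one sees a posteriori by comparing the two formulas below).

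First I would invoke the Poincar\'e--Birkhoff--Witt theorem: the associated graded algebra $\gr\caU(\bL)$ is the symmetric algebra $S(\bL)=\bigotimes_{k\ge 1}S(\bL_k)$ as a graded $\F$-vector space, so that
\[
H(t)=\prod_{k\ge 1}(1-t^k)^{-\dim(\bL_k)}.
\]
Next I would use that $\bL$ is of type FP to fix a finite resolution $0\to P_n\to\cdots\to P_0\to\F\to 0$ of the trivial module by finitely generated graded projective $\caU(\bL)$-modules. Since $\caU(\bL)$ is connected graded, graded Nakayama shows each $P_j$ is graded free of finite rank; recording the degrees of a homogeneous basis of $P_j$ by a polynomial $q_j(t)\in\Z[t]$ gives $H_{P_j}(t)=q_j(t)\,H(t)$, and the alternating sum of Hilbert series along the resolution yields $H(t)\cdot\sum_j(-1)^j q_j(t)=H_{\F}(t)=1$. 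Hence $H(t)^{-1}=\sum_j(-1)^j q_j(t)$ is a polynomial with constant term $1$ depending only on the cohomology of $\bL$, i.e. it is the characteristic polynomial $\chi_{\bL}(t)=\prod_{i=1}^n(1-\lambda_i t)$ of \eqref{eq:charpol}.

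Combining the two expressions for $H(t)$ gives $\prod_{k\ge 1}(1-t^k)^{-\dim(\bL_k)}=\prod_{i=1}^n(1-\lambda_i t)^{-1}$ in $\Q\dbl t\dbr$. I would then apply $-\log$ (legitimate, as both sides have constant term $1$) and expand using $-\log(1-u)=\sum_{m\ge 1}u^m/m$, turning the identity into $\sum_{k\ge 1}\dim(\bL_k)\sum_{m\ge 1}t^{km}/m=\sum_{i=1}^n\sum_{m\ge 1}\lambda_i^m t^m/m$. Comparing coefficients of $t^N$ and clearing the denominator $N$ produces the power-sum identity $\sum_{k\mid N}k\,\dim(\bL_k)=\sum_{i=1}^n\lambda_i^N$ for all $N\ge 1$. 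Finally M\"obius inversion gives $k\,\dim(\bL_k)=\sum_{j\mid k}\mu(k/j)\sum_{i=1}^n\lambda_i^{\,j}=\sum_{i=1}^n\sum_{j\mid k}\mu(k/j)\lambda_i^{\,j}=k\sum_{i=1}^n M_k(\lambda_i)$, and dividing by $k$ is exactly \eqref{eq:genWittForm}.

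The power-series and M\"obius-inversion manipulations are the classical mechanism behind Witt's dimension formula for free Lie algebras (the case $\chi_{\bL}(y)=1-ny$, where $\caU(\bL)$ is the free associative algebra on $n$ generators), so I expect no difficulty there. The step that genuinely uses the hypotheses, and the one I expect to demand the most care, is the identification $H(t)^{-1}=\chi_{\bL}(t)$: it rests on the facts that finitely generated graded projective $\caU(\bL)$-modules are free, that the alternating rank polynomial over a finite resolution of $\F$ is independent of the chosen resolution, and that it agrees with the cohomological definition of $\chi_{\bL}$ fixed earlier in the paper.
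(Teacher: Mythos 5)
Your proposal is correct and follows essentially the same route as the paper: the Poincar\'e--Birkhoff--Witt product formula for $h_{\caU(\bL)}(y)$, the Euler-characteristic identity $\chi_{\bL}(y)\cdot h_{\caU(\bL)}(y)=1$ (which is Proposition~\ref{prop:char}, proved there exactly by your alternating-sum-of-Hilbert-series argument over a finite resolution), followed by $-\log$, comparison of coefficients to get $\sum_{k\mid N}k\dim(\bL_k)=\sum_i\lambda_i^N$, and M\"obius inversion. The only cosmetic difference is that you re-derive the identification $H(t)^{-1}=\chi_{\bL}(t)$ in situ via graded Nakayama, whereas the paper simply cites \eqref{eq:chareq}.
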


Theorem~A can be deduced from Theorem~D and an analysis of necklace
polynomials at roots of unity (cf. \S\ref{ss:neckroot}).
For an infinite dimensional $\N$-graded Lie algebra $\bL$ of type FP
which is generated in degree $1$ its entropy $\boh(\bL)$ is the positive
real root of the characteristic polynomial $\chi_{\bL}(y)$
of maximal absolute value (cf. Prop.~\ref{prop:lmaxent}, Cor.~\ref{cor:brez1}).
Some general results on the eigenvalues of $\chi_{\bL}(y)$ can be obtained
(cf. Remark~\ref{rem:FP}, Fact~\ref{fact:desrule}), but
several questions will remain unanswered (cf. Question \ref{ques:reparteig} and \ref{ques:turan}).
Since it seems that the Koszul property has been investigated only scarsely in the context of Lie
algebras, we close this paper with a discussion of three classes of examples illustrating the
diversity of this class of Lie algebras.
\vskip12pt

\noindent
{\bf Acknowledgement:} The author would like to thank P.~Spiga
for very helpful discussions concerning the roots of independence polynomials
of finite loop-free graphs (cf. Ex.~\ref{ex:RAL}(d)).


\section{Graded connected algebras of finite type}
\label{s:graded}
Throughout the paper $\F$ will denote a field.
By $\N_0$ (resp. $\N$) we denote the set of non-negative (resp. positive) integers,
i.e., $\N_0=\N\cup\{0\}$. 
An $\N_0$-graded $\F$-vector space
$V=\coprod_{k\geq 0} V_k$ is said to be
of {\it finite type}, if $\dim(V_k)<\infty$ for all $k\geq 0$. For such a graded $\F$-vector space
its {\it Hilbert series} is defined by
\begin{equation}
\label{eq:HilserV}
h_{V}(y)=\textstyle{\sum_{k\geq 0} \dim(V_k)\cdot y^k\in\Z\dbl y\dbr},
\end{equation}
e.g., $V$ is finite-dimensional if, and only if,
$h_{V}(y)$ is a polynomial.


\subsection{Graded connected algebras of finite type}
\label{ss:gradedFT}
An {\it $\N_0$-graded $\F$-algebra} $\bA=\coprod_{k\geq 0} \bA_k$
is an associative $\F$-algebra and an $\N_0$-graded $\F$-vector space
satisfying $\bA_m\cdot\bA_n\subseteq\bA_{m+n}$ for all $m,n\geq 0$.
From now we will omit the appearance of the field $\F$ in the notation.

An $\N_0$-graded algebra $\bA$ is called to be {\it connected}, if $\bA_0=\F\cdot 1$, i.e.,
in this case one has a unique {\it augmentation} $\eps\colon\bA\to\F$
which is a homomorphism of $\N_0$-graded algebras.
By $\bA^+=\coprod_{k\geq 1}\bA_k=\kernel(\eps)$  we denote its
{\it augmentation ideal}, and $\F$ will denote the $\N_0$-graded left $\bA$-module
whose representation is equal to $\eps$.

For an $\N_0$-graded, connected algebra $\bA$ of finite type
let $\Atf$ denote the abelian category of $\N_0$-graded left $\bA$-modules
of finite type. In particular, if $0\to M\to N\to Q\to 0$ is a short exact sequence
in $\Atf$, one has 
\begin{equation}
\label{eq:conn1}
h_{N}(y)=h_{M}(y)+h_{Q}(y).
\end{equation}
For $M\in\ob(\Atf)$ put $M_{\bA}=M/\bA^+M$.
Then $M_{\bA}$ is an $\N_0$-graded vector space of finite type, and one has 
a canonical projection $\pi_{M}\colon M\to M_{\bA}$ of $\N_0$-graded left $\bA$-modules.
Moreover, $M=0$ if, and only if, $M_{\bA}=0$.

Let $\sigma\colon M_{\bA}\to M$ be a section of $\pi_{M}$
in the category of $\N_0$-graded vector spaces, i.e., $\pi_{M}\circ\sigma=\iid_{M_{\bA}}$.
The map $\sigma$ induces a surjective homomorphism 
of $\N_0$-graded, left $\bA$-modules $\tilde{\sigma}\colon \bA\otimes M_{\bA}\to M$, where $\otimes=\otimes_{\F}$,
given by $\tilde{\sigma}(a\otimes m)=a\sigma(m)$, $a\in\bA$, $m\in M_{\bA}$.   
Moreover, if $M_j=0$ for $0\leq j\leq k$, then
$\kernel(\tilde{\sigma})_i=0$ for $0\leq i\leq k+1$.

The just mentioned procedure can be used to build up a projective resolution
$(P_\bullet,\der_\bullet,\eps)$ of $\F$, i.e., one may define
\begin{itemize}
\item[(i)] $P_0=\bA$, 
\item[(ii)] $P_s=\bA\otimes\kernel(\der_{s-1})_\bA$ for $s\geq 1$, where we put $\der_0=\eps$.
\end{itemize}
The mappings $\partial_s\colon P_s\to\kernel(\partial_{s-1})$, $s\geq 1$, coincide with $\tilde{\sigma}_s$, where
$\sigma_s\colon \kernel(\der_{s-1})_\bA\to\kernel(\der_{s-1})$ is a section in the category of 
$\N_0$-graded vector spaces.
For an $\N_0$-graded, connected algebra $\bA$ its {\it cohomology algebra}
$\Ext^{\bullet,\bullet}_{\bA}(\F,\F)$
is naturally bigraded. The first degree in the notation will correspond to the {\it homological degree}, while the second
will refer to the {\it internal degree}. The construction of the projective resolution 
$(P_\bullet,\der_\bullet,\eps)$ has shown the following.

\begin{prop}
\label{prop:ident}
Let $\bA$ be an $\N_0$-graded, connected algebra of finite type. 
\begin{itemize}
\item[(a)] 
$\Ext^{s,t}_{\bA}(\F,\F)\simeq (\kernel(\der_{s-1})_{\bA,t})^\ast$,
where $\argu^\ast=\Hom_\F(\argu,\F)$. 
\item[(b)] $\dim(\Ext^{s,t}_{\bA}(\F,\F))<\infty$, and
$\Ext^{s,t}_{\bA}(\F,\F)=0$ for $t<s$.
\end{itemize}
\end{prop}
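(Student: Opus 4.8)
The plan is to read $\Ext^{\bullet,\bullet}_{\bA}(\F,\F)$ off the resolution $(P_\bullet,\der_\bullet,\eps)$ just constructed, the single point needing verification being that this resolution is \emph{minimal}, i.e. that $\der_s(P_s)\subseteq\bA^+P_{s-1}$ for every $s\geq 1$. For $s=1$ this holds because $\der_1(P_1)=\kernel(\eps)=\bA^+=\bA^+P_0$. For $s\geq 2$ write $P_{s-1}=\bA\otimes N$ with $N=\kernel(\der_{s-2})_{\bA}$; then $\bA^+P_{s-1}=\bA^+\otimes N$ and $P_{s-1}=(1\otimes N)\oplus(\bA^+\otimes N)$ as graded vector spaces. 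Let $\pi\colon\kernel(\der_{s-2})\to N$ be the canonical projection, so that $\pi\circ\sigma_{s-1}=\iid_N$ and $\kernel(\pi)=\bA^+\kernel(\der_{s-2})$. Given $x\in\kernel(\der_{s-1})$, write $x=1\otimes n+y$ with $n\in N$ and $y\in\bA^+\otimes N=\bA^+P_{s-1}$; then $0=\der_{s-1}(x)=\sigma_{s-1}(n)+\der_{s-1}(y)$, and since $\der_{s-1}$ is $\bA$-linear and surjective onto $\kernel(\der_{s-2})$ we get $\der_{s-1}(y)\in\bA^+\kernel(\der_{s-2})=\kernel(\pi)$. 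Applying $\pi$ to the relation gives $0=n$, hence $x=y\in\bA^+P_{s-1}$. I expect this minimality computation to be the only step requiring a genuine (if short) argument; everything else is formal.

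Given minimality, applying $\Hom_{\bA}(\argu,\F)$ to $(P_\bullet,\der_\bullet)$ produces a complex with \emph{zero} differentials, because every homomorphism of $\bA$-modules $P_{s-1}\to\F$ factors through $P_{s-1}/\bA^+P_{s-1}$ and hence annihilates $\der_s(P_s)$. Therefore $\Ext^{s,t}_{\bA}(\F,\F)=\Hom_{\bA}(P_s,\F)_t$ for all $s,t$. For $s\geq 1$ one has $P_s=\bA\otimes\kernel(\der_{s-1})_{\bA}$, and since $\F$ carries the trivial action, restriction to $1\otimes\kernel(\der_{s-1})_{\bA}$ identifies an $\bA$-linear map $\bA\otimes\kernel(\der_{s-1})_{\bA}\to\F$ with an arbitrary $\F$-linear one $\kernel(\der_{s-1})_{\bA}\to\F$; keeping track of the internal degree (a generator of $P_s$ arising from a degree-$t$ element of $\kernel(\der_{s-1})_{\bA}$ sits in internal degree $t$) gives the natural isomorphism $\Hom_{\bA}(P_s,\F)_t\simeq(\kernel(\der_{s-1})_{\bA,t})^{\ast}$ asserted in (a). For $s=0$ the claim reduces to the obvious identity $\Ext^{0,\bullet}_{\bA}(\F,\F)=\F$, concentrated in internal degree $0$.

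For part (b), finiteness is immediate from (a): $\bA$ and the trivial module $\F$ both lie in $\Atf$, so by induction every $P_s$ and every $\kernel(\der_s)$ lies in $\Atf$, whence each $\kernel(\der_{s-1})_{\bA,t}$ is finite-dimensional. For the vanishing when $t<s$ I would first show by induction on $s\geq 1$ that $\kernel(\der_{s-1})_j=0$ for $0\leq j\leq s-1$: the case $s=1$ is $(\bA^+)_0=0$, and if the statement holds for $s$ then $M=\kernel(\der_{s-1})_{\bA}$ also vanishes in degrees $\leq s-1$, so applying the connectivity remark recorded before the construction (``$M_j=0$ for $0\leq j\leq k$'' implies ``$\kernel(\tilde\sigma)_i=0$ for $0\leq i\leq k+1$'') to $\der_s=\tilde\sigma_s\colon\bA\otimes M\to\kernel(\der_{s-1})$ with $k=s-1$ yields $\kernel(\der_s)_i=0$ for $0\leq i\leq s$, which is the assertion for $s+1$. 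Passing to the quotient $\kernel(\der_{s-1})_{\bA}$ and dualizing via (a) then gives $\Ext^{s,t}_{\bA}(\F,\F)=0$ for $t\leq s-1$.
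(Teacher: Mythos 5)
Your proof is correct and follows exactly the route the paper intends: the paper offers no written proof beyond the remark that the construction of $(P_\bullet,\der_\bullet,\eps)$ ``has shown'' the statement, and what you supply is precisely the missing verification that this resolution is minimal (so that the dualized complex has zero differentials) together with the connectivity induction based on the recorded implication ``$M_j=0$ for $j\leq k$ implies $\kernel(\tilde\sigma)_i=0$ for $i\leq k+1$.'' All steps check out, including the decomposition $P_{s-1}=(1\otimes N)\oplus(\bA^+\otimes N)$ and the use of $\pi\circ\sigma_{s-1}=\iid_N$ to force $n=0$.
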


One can collect the information provided by Proposition~\ref{prop:ident} 
in the power series
\begin{equation}
\label{eq:powerser}
\begin{aligned}
\tchi_{\bA}(x,y)&=\sum_{s,t\geq 0}\dim(\Ext^{s,t}_{\bA}(\F,\F))\cdot (-x)^s\cdot y^t&&\in\Z\dbl x,y\dbr,\\
\chi_{\bA}(y)=\tchi_{\bA}(1,y)&=\sum_{s,t\geq 0}(-1)^s\cdot \dim(\Ext^{s,t}_{\bA}(\F,\F))\cdot y^t&&\in\Z\dbl y\dbr.
\end{aligned}
\end{equation}
The power series $\chi_{\bA}(y)\in\Z\dbl y\dbr$ will be called the {\it characteristic power series} of 
the $\N_0$-graded, connected $\F$-algebra $\bA$. 
Obviously, $\tchi_{\bA}(x,y)$ contains the complete information
on the dimensions of the bi-graded cohomology algebra $\Ext^{\bullet,\bullet}_{\bA}(\F,\F)$.
The information contained in $\chi_{\bA}(y)$ is somehow weaker, but
its importance is reflected in the following property.

\begin{prop}
\label{prop:char}
Let $\bA$ be an $\N_0$-graded, connected $\F$-algebra. Then one has
\begin{equation}
\label{eq:chareq}
\chi_{\bA}(y)\cdot h_{\bA}(y)=1\qquad \text{in $\Z\dbl y\dbr$.}
\end{equation}
\end{prop}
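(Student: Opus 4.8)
The plan is to extract \eqref{eq:chareq} from the free resolution $(P_\bullet,\der_\bullet,\eps)$ of $\F$ constructed above, by taking Euler characteristics separately in each internal degree and then assembling the results into an identity of power series. Write $V_0=\F$ and $V_s=\kernel(\der_{s-1})_{\bA}$ for $s\geq 1$, so that $P_s=\bA\otimes V_s$ for every $s\geq 0$. By Proposition~\ref{prop:ident}(a) one has $\dim\bigl((V_s)_t\bigr)=\dim\bigl(\Ext^{s,t}_{\bA}(\F,\F)\bigr)$ for all $s,t\geq 0$; hence, by Proposition~\ref{prop:ident}(b), $(V_s)_t=0$ for $t<s$, and therefore $(P_s)_t=\bigoplus_{a+b=t}\bA_a\otimes (V_s)_b=0$ for $t<s$ as well.

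Next I would fix an internal degree $t\geq 0$ and restrict the resolution to that degree. This yields an exact complex of $\F$-vector spaces $\cdots\to (P_2)_t\to (P_1)_t\to (P_0)_t\to \F_t\to 0$; its terms are finite-dimensional because $\bA$ is of finite type, and by the vanishing just noted only finitely many of them are nonzero. Consequently its Euler characteristic is zero, i.e.
\begin{equation*}
\sum_{s\geq 0}(-1)^s\dim\bigl((P_s)_t\bigr)=\dim(\F_t),
\end{equation*}
where $\dim(\F_t)$ equals $1$ for $t=0$ and $0$ otherwise.

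Finally I would multiply this equality by $y^t$, sum over $t\geq 0$, and interchange the two (locally finite) summations. Using that the Hilbert series is multiplicative under $\otimes=\otimes_{\F}$, so that $h_{P_s}(y)=h_{\bA}(y)\cdot h_{V_s}(y)$, the left-hand side becomes
\begin{equation*}
\sum_{s\geq 0}(-1)^s h_{P_s}(y)=h_{\bA}(y)\cdot\sum_{s\geq 0}(-1)^s h_{V_s}(y)=h_{\bA}(y)\cdot\chi_{\bA}(y),
\end{equation*}
while the right-hand side is $\sum_{t\geq 0}\dim(\F_t)\,y^t=1$, which is precisely \eqref{eq:chareq}.

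The only genuinely delicate point is what makes all of this legitimate: one must know that in each fixed internal degree the complex $(P_\bullet)_t$ is a \emph{bounded} complex of finite-dimensional spaces, for otherwise neither the Euler-characteristic argument nor the rearrangement of the double series would be justified. This boundedness is supplied by Proposition~\ref{prop:ident}(b), i.e.\ by the vanishing $\Ext^{s,t}_{\bA}(\F,\F)=0$ for $t<s$; everything else is bookkeeping.
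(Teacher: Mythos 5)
Your argument is correct and is essentially the paper's own proof: the paper likewise takes the alternating sum $\sum_{s\geq 0}(-1)^s h_{P_s}(y)=1$ from acyclicity of $(P_\bullet,\der_\bullet)$ and factors out $h_{\bA}(y)$ using $P_s=\bA\otimes\kernel(\der_{s-1})_{\bA}$ together with Proposition~\ref{prop:ident}. You have merely spelled out the degreewise boundedness justification (via $\Ext^{s,t}=0$ for $t<s$) that the paper leaves implicit.
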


\begin{proof}
The acyclicity of the chain complex $(P_\bullet,\der_\bullet)$ 
and Proposition~\ref{prop:ident} imply that
\begin{equation}
\label{eq:charpower}
1=\sum_{s\geq 0} (-1)^s\cdot h_{P_s}(y)
=h_{\bA}(y)\cdot \sum_{s,t\geq 0} (-1)^s\cdot\dim(\Ext^{s,t}_{\bA}(\F,\F))\cdot y^t.
\end{equation}
This yields the claim.
\end{proof}


\subsection{Cohomological finiteness conditions}
\label{ss:cofin}
Let $\bA$ be an $\N_0$-graded, connected algebra.
The {\it cohomological dimension} $\ccd(\bA)$ of $\bA$ is defined by
\begin{equation}
\label{eq:ccd}
\ccd(\bA)=\min(\{\,n\in\N_0\mid\Ext_\bA^{n+1}(\F,M)=0\ \text{for all $M\in\ob(\Amod)$}\,\}\cup\{\infty\}),
\end{equation}
where $\Amod$ denotes the abelian category of left $\bA$-modules.
Moreover, $\bA$ is called to be {\it of type FP$_m$}, $1\leq m\leq \infty$, if 
there exists a partial projective resolution
\begin{equation}
\label{eq:FPinf}
\xymatrix{
P_m\ar[r]^{\der_m}&
P_{m-1}\ar[r]^{\der_{m-1}}&\ldots\ar[r]^{\der_2}&
P_1\ar[r]^{\der_a}&P_0\ar[r]^{\eps}&\F\ar[r]&0}
\end{equation}
with $P_0,\ldots, P_m$ being finitely generated. 
If $\bA$ is of type FP$_\infty$ and $\ccd(\bA)<\infty$, then one calls $\bA$ to be {\it of type
FP} (cf. \cite[Chap.~8.6]{brown:coho}).
From Proposition~\ref{prop:ident} one concludes the following.

\begin{fact}
\label{fact:propco}
Let $\bA$ be an $\N_0$-graded connected algebra of finite type.
\begin{itemize}
\item[(a)] $\ccd(\bA)\leq d<\infty$ if, and only if, $\Ext_{\bA}^{d+1,\bullet}(\F,\F)=0$.
\item[(b)] $\bA$ is of type FP$_\infty$ if, and only if, for every $s\geq 1$ there exists $m(s)\geq 0$
such that $\Ext_{\bA}^{s,t}(\F,\F)=0$ for all $t\geq m(s)$.
\item[(c)] $\bA$ is of type FP if, and only if, $\tilde{\chi}_{\bA}(x,y)$ is a polynomial.
\end{itemize}
\end{fact}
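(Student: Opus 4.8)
The plan is to extract all three statements from the canonical projective resolution $(P_\bullet,\der_\bullet,\eps)$ of \S\ref{ss:gradedFT}, for which $P_0=\bA$ and $P_s=\bA\otimes\kernel(\der_{s-1})_\bA$ for $s\geq1$, together with the identification $\Ext^{s,t}_\bA(\F,\F)\simeq(\kernel(\der_{s-1})_{\bA,t})^\ast$ of Proposition~\ref{prop:ident}. Before treating (a)--(c) I would isolate three elementary facts. First: since $P_s$ is the free $\bA$-module on the graded $\F$-vector space $\kernel(\der_{s-1})_\bA$, it is finitely generated exactly when $\kernel(\der_{s-1})_\bA$ is finite-dimensional, equivalently (Proposition~\ref{prop:ident}(a)) when $\Ext^{s,t}_\bA(\F,\F)=0$ for all $t$ beyond some $m(s)$. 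Second: this resolution is minimal, i.e.\ $\der_s(P_s)\subseteq\bA^+P_{s-1}$, so that applying $\Hom_\bA(\argu,\F)$ kills every differential; hence $\Ext^{d+1}_\bA(\F,\F)=\Hom_\bA(P_{d+1},\F)=\prod_t\Ext^{d+1,t}_\bA(\F,\F)$, so the ungraded group vanishes if and only if every bigraded component does. Third: $\kernel(\der_{s-1})=0$ if and only if $\kernel(\der_{s-1})_\bA=0$, by the graded Nakayama lemma recalled in \S\ref{ss:gradedFT}.

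For (a), if $\Ext^{d+1,\bullet}_\bA(\F,\F)=0$ then $\kernel(\der_d)_\bA=0$ by Proposition~\ref{prop:ident}(a), hence $\kernel(\der_d)=0$, so $\der_d\colon P_d\to\kernel(\der_{d-1})$ is an isomorphism and $0\to P_d\to P_{d-1}\to\cdots\to P_0\to\F\to0$ is a length-$d$ projective resolution of $\F$ in $\Amod$; therefore $\Ext^{n}_\bA(\F,M)=0$ for all $n\geq d+1$ and all $M$, i.e.\ $\ccd(\bA)\leq d$. Conversely $\ccd(\bA)\leq d$ gives $\Ext^{d+1}_\bA(\F,\F)=0$, and by the second fact above this forces $\Ext^{d+1,\bullet}_\bA(\F,\F)=0$.

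For (b): if $\Ext^{s,t}_\bA(\F,\F)=0$ for $t\geq m(s)$ and all $s\geq1$, the first fact makes every $P_s$ finitely generated, so $(P_\bullet,\der_\bullet,\eps)$ exhibits $\bA$ as being of type FP$_\infty$. The converse is the one place where a genuinely homological input is needed: assuming $\bA$ of type FP$_\infty$, I would show by induction on $s$ that each $P_s$ is finitely generated. The base case $P_0=\bA$ is clear, and if $P_0,\dots,P_{s-1}$ are finitely generated then, because $\bA$ is FP$_s$, the syzygy $\kernel(\der_{s-1})$ is finitely generated --- this is the generalized Schanuel lemma comparing the canonical partial resolution with a finitely generated one (cf.\ \cite[Chap.~VIII]{brown:coho}) --- whence $\kernel(\der_{s-1})_\bA$ is finite-dimensional by graded Nakayama, so $\Ext^{s,t}_\bA(\F,\F)=0$ for $t\gg0$ by Proposition~\ref{prop:ident}(a) and $P_s=\bA\otimes\kernel(\der_{s-1})_\bA$ is finitely generated. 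I expect this comparison step to be the only real obstacle; the rest is bookkeeping around Proposition~\ref{prop:ident}.

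For (c), $\bA$ is of type FP precisely when it is FP$_\infty$ and $\ccd(\bA)<\infty$. By (b) the first condition says that for every $s\geq1$ one has $\Ext^{s,t}_\bA(\F,\F)=0$ for $t\geq m(s)$, and by (a) the second says $\Ext^{s,\bullet}_\bA(\F,\F)=0$ for $s$ beyond some $d$; combined with the vanishing $\Ext^{s,t}_\bA(\F,\F)=0$ for $t<s$ from Proposition~\ref{prop:ident}(b), these two conditions hold simultaneously if and only if $\Ext^{s,t}_\bA(\F,\F)\neq0$ for only finitely many pairs $(s,t)$. Since each of these dimensions is finite (Proposition~\ref{prop:ident}(b)), this is exactly the statement that $\tchi_\bA(x,y)=\sum_{s,t\geq0}\dim(\Ext^{s,t}_\bA(\F,\F))\,(-x)^s y^t$ is a polynomial.
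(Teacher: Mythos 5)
Your proposal is correct and follows exactly the route the paper intends: the paper offers no written proof beyond the remark that the Fact follows from Proposition~\ref{prop:ident}, i.e.\ from the canonical minimal resolution $P_s=\bA\otimes\kernel(\der_{s-1})_\bA$, and your argument is precisely the expansion of that remark (minimality killing the differentials after applying $\Hom_\bA(\argu,\F)$, graded Nakayama to terminate the resolution in (a), and the generalized Schanuel comparison for the converse of (b)). No gaps.
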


\begin{rem}
\label{rem:tensor}
(a) For an $\N$-graded vector space $V$ ($V_0=0$) the {\it tensor algebra}
\begin{equation}
\label{eq:tensor}
\textstyle{\boT(V)=\coprod_{k\geq 0}\boT_k(V),\qquad
\boT_0(V)=\F\cdot 1,}\qquad
\boT_k(V)=\overbrace{V\otimes\cdots\otimes V}^{\text{$k$-times}}
\end{equation} 
is a connected $\N_0$-graded algebra with the grading induced by the grading of $V$.
Moreover, one has a projective resolution
\begin{equation}
\label{eq:tensor2}
\xymatrix{
0\ar[r]&\boT(V)\otimes V\ar[r]^-{\der_1}&\boT(V)\ar[r]^-{\eps}&\F\ar[r]&0},
\end{equation}
where $\der_1$ is given by multiplication.
Thus, $\boT(V)$ is of finite type if, and only if, $V$ is of finite type,
and $\boT(V)$ is of type FP$_1$ if, and only if, $V$ is finite-dimensional.

\noindent
(b) Let $\bA$ be an $\N_0$-graded, connected algebra,
and let $\tau\colon \bA^+_{\bA}\to\bA^+$ be a section in the category of $\N$-graded vector spaces.
Then one has a unique homomorphism $\tau_\bullet\colon \boT(\bA^+_{\bA})\to\bA$
of $\N_0$-graded, connected algebras satisfying $\tau_1=\tau$.
Moreover, $\tau_\bullet$ is surjective. Hence 
$\bA$ is finitely generated if, and only if, $\bA^+_{\bA}$ is finite-dimensional.
In particular, every finitely generated $\N_0$-graded, connected algebra is of finite type.
Let $\der_1^{\bA}\colon \bA\otimes \bA^+_{\bA}\to\bA$ be given by
$\der_1^{\bA}(a\otimes b)=a\tau(b)$, $a\in\bA$, $b\in\bA^+_{\bA}$.
Then
\begin{equation}
\label{eq:pproj}
\xymatrix{
\bA\otimes \bA^+_{\bA}\ar[r]^-{\der_1^{\bA}}&
\bA\ar[r]^-{\eps_{\bA}}&\F\ar[r]&0.}
\end{equation}
is a minimal partial projective resolution, and from the minimality 
one concludes that 
$\Tor^{\bA}_1(\F,\F)\simeq \bA^+_{\bA}$.
Thus $\bA$ is finitely generated if, and only if, $\bA$ is of type FP$_1$. 

\noindent
(c) By construction, one has a commutative diagram
\begin{equation}
\label{eq:rel1}
\xymatrix{
&&&\kernel(\tau_\bullet)\ar[d]\ar@{-->}[ld]_j\ar@/_1truecm/@{..>}[2,-2]_{\rho}&&\\
&0\ar[r]&\boT(\bA^+_{\bA})\otimes\bA^+_{\bA}\ar[r]^-{\der_1}\ar[d]^{\tau_\bullet\otimes\iid}
&\boT(\bA_{\bA}^+)\ar[r]^{\eps}\ar[d]^{\tau_\bullet}&\F\ar[r]\ar@{=}[d]&0\\
0\ar[r]&\bor(\tau_\bullet)\ar[r]&
\bA\otimes\bA^+_{\bA}\ar[r]^-{\der^{\bA}_1}&\bA\ar[r]^{\eps_{\bA}}&\F\ar[r]&0}
\end{equation}
with exact rows. The left $\bA$-module $\bor(\tau_\bullet)\simeq\Tor_1^{\boT(\bA^+_{\bA})}(\bA,\F)$ 
is also called the {\it relation module} of the presentation $\tau_\bullet$.
Since $\kernel(\tau_\bullet)\subseteq\boT(\bA^+_{\bA})^+$, there exists an injective homomorphism 
of left $\boT(\bA^+_{\bA})$-modules $j\colon\kernel(\tau_\bullet)\to
\boT(\bA^+_{\bA})\otimes\bA^+_{\bA}$
making the diagram
\eqref{eq:rel1} commute. Let $\beta=(\tau_\bullet\otimes\iid)\circ j$. As $\der^{\bA}_1\circ\beta=0$, there exists
a homomorphism of left $\boT(\bA^+_{\bA})$-modules
$\rho\colon \kernel(\pi)\to\bor(\tau_\bullet)$ making the diagram \eqref{eq:rel1} commute.
This homomorphism has the following property.

\begin{prop}
\label{prop:rel}
The homomorphism $\rho\colon\kernel(\tau_\bullet)\to\bor(\tau_\bullet)$ is surjective 
and induces an isomorphism
$\brho\colon\kernel(\tau_\bullet)/\kernel(\tau_\bullet)\bA^+_{\bA}\longrightarrow\bor(\tau_\bullet)$ of left $\bA$-modules.
\end{prop}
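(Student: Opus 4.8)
The plan is to prove both assertions by a short diagram chase in \eqref{eq:rel1}, resting on two structural features of the tensor algebra. First, by \eqref{eq:tensor2} the map $\der_1\colon\boT(\bA^+_\bA)\otimes\bA^+_\bA\to\boT(\bA^+_\bA)$ is injective with image $\boT(\bA^+_\bA)^+$; since $\kernel(\tau_\bullet)\subseteq\boT(\bA^+_\bA)^+$, the commutativity of \eqref{eq:rel1} forces $\der_1\circ j$ to be the inclusion $\iota\colon\kernel(\tau_\bullet)\hookrightarrow\boT(\bA^+_\bA)$, i.e. $j=\der_1^{-1}|_{\kernel(\tau_\bullet)}$. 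Second, tensoring the exact sequence $0\to\kernel(\tau_\bullet)\to\boT(\bA^+_\bA)\xrightarrow{\tau_\bullet}\bA\to0$ over $\F$ with $\bA^+_\bA$ shows that $\tau_\bullet\otimes\iid$ is surjective with kernel $\kernel(\tau_\bullet)\otimes\bA^+_\bA$. I would also note at the outset that $\kernel(\tau_\bullet)\cdot\bA^+_\bA=\kernel(\tau_\bullet)\cdot\boT(\bA^+_\bA)^+$, since $\boT(\bA^+_\bA)^+=\boT(\bA^+_\bA)\cdot\bA^+_\bA$ and $\kernel(\tau_\bullet)$ is a right ideal; thus the quotient in the statement equals $\kernel(\tau_\bullet)/\kernel(\tau_\bullet)\boT(\bA^+_\bA)^+$.

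For surjectivity of $\rho=(\tau_\bullet\otimes\iid)\circ j$: given $z\in\bor(\tau_\bullet)=\kernel(\der^{\bA}_1)$, choose $w\in\boT(\bA^+_\bA)\otimes\bA^+_\bA$ with $(\tau_\bullet\otimes\iid)(w)=z$; commutativity of the left square of \eqref{eq:rel1} gives $\tau_\bullet(\der_1(w))=\der^{\bA}_1(z)=0$, so $x:=\der_1(w)$ lies in $\kernel(\tau_\bullet)$, and then injectivity of $\der_1$ together with $\der_1\circ j=\iota$ forces $j(x)=w$, whence $\rho(x)=(\tau_\bullet\otimes\iid)(w)=z$.

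For the kernel: if $\rho(x)=(\tau_\bullet\otimes\iid)(j(x))=0$ then $j(x)\in\kernel(\tau_\bullet\otimes\iid)=\kernel(\tau_\bullet)\otimes\bA^+_\bA$, so $x=\der_1(j(x))$ lies in $\der_1(\kernel(\tau_\bullet)\otimes\bA^+_\bA)=\kernel(\tau_\bullet)\cdot\bA^+_\bA$; conversely, for $x=\sum_i y_iv_i$ with $y_i\in\kernel(\tau_\bullet)$ and $v_i\in\bA^+_\bA$ one has $j(x)=\sum_i y_i\otimes v_i$ (again because $\der_1$ is injective and maps this element to $x$), so $\rho(x)=\sum_i\tau_\bullet(y_i)\otimes v_i=0$. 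Hence $\kernel(\rho)=\kernel(\tau_\bullet)\cdot\bA^+_\bA$, and $\rho$ descends to a bijection $\brho$ on the quotient. Finally $\rho$ is a homomorphism of left $\boT(\bA^+_\bA)$-modules (a composite of such), and since $\kernel(\tau_\bullet)\subseteq\boT(\bA^+_\bA)^+$ we get $\kernel(\tau_\bullet)\cdot\kernel(\tau_\bullet)\subseteq\kernel(\tau_\bullet)\cdot\boT(\bA^+_\bA)^+=\kernel(\tau_\bullet)\cdot\bA^+_\bA$, so the $\boT(\bA^+_\bA)$-action on $\kernel(\tau_\bullet)/\kernel(\tau_\bullet)\bA^+_\bA$ factors through $\bA$; hence $\brho$ is an isomorphism of left $\bA$-modules.

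I do not expect a genuine obstacle: the only delicate point is keeping the identifications straight — in particular that $j(x)$ is nothing but the unique $\der_1$-antecedent of $x$, which makes the action of $\tau_\bullet\otimes\iid$ on it transparent. One could alternatively avoid the chase by applying $\argu\otimes_{\boT(\bA^+_\bA)}\F$ to $0\to\kernel(\tau_\bullet)\to\boT(\bA^+_\bA)\to\bA\to0$ and reading off $\Tor_1^{\boT(\bA^+_\bA)}(\bA,\F)\cong\kernel(\tau_\bullet)\otimes_{\boT(\bA^+_\bA)}\F$, but identifying this canonical isomorphism with the map $\brho$ coming from \eqref{eq:rel1} still requires essentially the same computation.
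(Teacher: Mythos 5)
Your proof is correct and follows essentially the same diagram chase in \eqref{eq:rel1} as the paper's: lift the image of an element of $\bor(\tau_\bullet)$ through the surjection $\tau_\bullet\otimes\iid$, push down by $\der_1$ into $\kernel(\tau_\bullet)$, and identify $\kernel(\rho)=\der_1(\kernel(\tau_\bullet\otimes\iid))=\kernel(\tau_\bullet)\bA^+_{\bA}$. You merely make explicit two points the paper leaves implicit, namely that $j$ is the restriction of $\der_1^{-1}$ to $\kernel(\tau_\bullet)\subseteq\boT(\bA^+_{\bA})^+$ and that the $\boT(\bA^+_{\bA})$-action on the quotient factors through $\bA$, so $\brho$ is indeed $\bA$-linear.
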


\begin{proof}
Let $y$ be an element in $\bor(\tau_\bullet)$, and let $z$ be its canonical image in
$\bA\otimes\bA^+_{\bA}$. As $\tau_\bullet\otimes\iid$ is surjective, there exists
$w\in \boT(\bA_{\bA}^+)\otimes\bA_{\bA}^+$ such that $(\tau_\bullet\otimes\iid)(w)=z$.
By the commutativity of \eqref{eq:rel1}, $\der_1(w)\in\kernel(\tau_\bullet)$, and it is easy to verify that
$\rho(\der_1(w))=y$. Hence $\rho$ is surjective. By definition,
$\kernel(\rho)=\der_1(\kernel(\tau_\bullet\otimes\iid))=\kernel(\tau_\bullet)\bA_{\bA}^+$. 
\end{proof}

From Proposition~\ref{prop:rel} one concludes that one has isomorphisms
\begin{equation}
\label{eq:FP2}
\Tor^{\bA}_2(\F,\F)\simeq \bor(\tau_\bullet)_{\bA}\simeq\kernel(\tau_\bullet)/
(\bA^+_{\bA}\kernel(\tau_\bullet)+\kernel(\tau_\bullet)\bA^+_{\bA}).
\end{equation}
Hence $\bA$ is type FP$_2$ if, and only if, $\bA$ is finitely presented. Note that one has an isomorphism
$\Ext^{2,\bullet}_{\bA}(\F,\F)\simeq\bor(\tau_\bullet)_{\bA}^\ast$,
where $\argu^\ast=\Hom_{\F}(\argu,\F)$.
\end{rem}


\subsection{$\chi$-finite algebras}
\label{ss:chif}
An $\N_0$-graded, connected algebra $\bA$ 
will be called to be {\it $\chi$-finite}, if it is of finite type and $\chi_{\bA}(y)$ is a polynomial.
By Fact~\ref{fact:propco}, the class of $\chi$-finite algebras 
contains the class of $\N_0$-graded, connected algebras which are of finite type and of type FP.
Moreover, by Proposition~\ref{prop:char}, this class coincides with the class 
considered in \cite[\S 2]{szsh:ent}
of $\N_0$-graded, connected algebras of finite type with a linear recurrence relation. 

Let $\bA$ be a $\chi$-finite algebra. 
The integer $\deg(\bA)=\deg(\chi_{\bA}(y))$ will be called the {\it degree} of $\bA$.
Let $K_{\bA}=\Q(\chi_{\bA})$ denote the splitting field of $\chi_{\bA}(y)$ over $\Q$,
and let $\iota\colon K_{\bA}\to\C$ be a fixed complex embedding of $K_{\bA}$
in the field of complex numbers. For simplicity we may assume that $\iota$ is given by inclusion.
The
numbers $\lambda_1,\ldots,\lambda_n\in K_{\bA}\subseteq\C$, $n=\deg(\bA)$,
satisfying
\begin{equation}
\label{eq:charp1}
\chi_{\bA}(y)=(1-\lambda_1y)\cdots (1-\lambda_n y)
\end{equation}
will be called the {\it eigenvalues} of $\bA$, and 
the leading coefficient of $\chi_{\bA}(y)$ times $(-1)^n$ 
will be called the {\it conductor} $c_{\bA}$ of $\bA$,
i.e., one has $c_{\bA}=\lambda_1\cdots\lambda_n$. For $\bA\not=\F$ put
\begin{equation}
\label{eq:lmax}
\lambda_{\max}=\max\{\,|\lambda_1|,\ldots,|\lambda_n|\,\}\in\R_{>0}.
\end{equation}
Obviously, $\lambda_{\max}\geq 1$,
and one has the following property. 

\begin{prop}
\label{prop:lmaxent}
Let $\bA$, $\bA\not=\F$, be a $\chi$-finite algebra. 
Then $\boh(\bA)=\lambda_{\max}$, and
$\lambda_{\max}$ is an eigenvalue of $\bA$. Moreover, one has $1\leq \boh(\bA)<\infty$.
\end{prop}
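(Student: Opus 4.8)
The plan is to exploit the identity $\chi_{\bA}(y)\cdot h_{\bA}(y)=1$ of Proposition~\ref{prop:char}, which exhibits the Hilbert series as the rational function
\[
h_{\bA}(y)=\frac{1}{\chi_{\bA}(y)}=\prod_{1\leq i\leq n}\frac{1}{1-\lambda_i y},
\]
and then to read off both the entropy and the statement about $\lambda_{\max}$ from the behaviour of this rational function near $0$. First I would note that since $\bA\neq\F$, the power series $h_{\bA}(y)$ is not the constant $1$, so $\chi_{\bA}(y)\neq 1$; hence at least one eigenvalue is non-zero and $\lambda_{\max}>0$ (cf. \eqref{eq:lmax}). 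Because the numerator of $h_{\bA}$ is the non-vanishing constant $1$, there is no pole--zero cancellation, so the poles of $h_{\bA}$ are exactly the numbers $\lambda_i^{-1}$ with $\lambda_i\neq 0$, and the one of smallest modulus has modulus $\lambda_{\max}^{-1}$.

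Next I would compute the radius of convergence $R$ of the Taylor expansion $h_{\bA}(y)=\sum_{k\geq 0}\dim(\bA_k)\,y^k$ at $0$ in two ways. For a rational function $R$ is the distance from $0$ to the nearest pole, i.e. $R=\lambda_{\max}^{-1}\in(0,\infty)$; finiteness of $\lambda_{\max}$ is clear since $\chi_{\bA}(y)$ has only finitely many roots. On the other hand, the Cauchy--Hadamard formula gives $R^{-1}=\limsup_{k\to\infty}\sqrt[k]{\dim(\bA_k)}$, which by \eqref{eq:entdef} equals $\boh(\bA)$. Hence $\boh(\bA)=\lambda_{\max}$, and in particular $\boh(\bA)<\infty$.

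For the assertion that $\lambda_{\max}$ is itself an eigenvalue, the point is that the coefficients $\dim(\bA_k)$ are non-negative, so Pringsheim's theorem applies: the point $y=R=\lambda_{\max}^{-1}$ on the circle of convergence is a singularity of $h_{\bA}$, and since $h_{\bA}$ is rational this singularity must be a pole; therefore $\chi_{\bA}(\lambda_{\max}^{-1})=0$, i.e. $1-\lambda_i\lambda_{\max}^{-1}=0$ for some $i$, which forces $\lambda_i=\lambda_{\max}$. Finally, for $\boh(\bA)\geq 1$ I would argue by contradiction: if $\lambda_{\max}<1$ then $R>1$, so $\sum_k\dim(\bA_k)$ converges, hence $\dim(\bA_k)=0$ for all large $k$; then $h_{\bA}\in\Z[y]$, and the relation $\chi_{\bA}(y)h_{\bA}(y)=1$ between two polynomials, together with $h_{\bA}(0)=1$, forces $h_{\bA}=1$, i.e. $\bA=\F$, contrary to hypothesis. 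I expect the only genuinely delicate step to be the claim that $\lambda_{\max}$ is an eigenvalue: this is exactly where non-negativity of the Hilbert coefficients is indispensable (via Pringsheim), since for a general rational function the dominant pole need not lie on the positive real axis.
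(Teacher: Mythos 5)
Your proof is correct and follows essentially the same route as the paper: the identity $\chi_{\bA}(y)\cdot h_{\bA}(y)=1$, the identification of the radius of convergence with the distance to the nearest pole $\lambda_{\max}^{-1}$ via Cauchy--Hadamard, and a Pringsheim-type argument from the non-negativity of the coefficients (the paper writes out the inequality $|h(z)|\leq h(|z|)$ rather than citing the theorem) to conclude that $\lambda_{\max}^{-1}$ is itself a pole. The only divergence is the final inequality $\boh(\bA)\geq 1$, which the paper obtains at once from the fact that the conductor $c_{\bA}=\lambda_1\cdots\lambda_n$ is a nonzero integer, whereas you argue by contradiction via convergence of $\sum_k\dim(\bA_k)$; both are valid.
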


\begin{proof}
Let $h=h_{\bA}(y)$ denote the Hilbert series of $\bA$,
and let $\chi=\chi_{\bA}(y)$ denote the characteristic polynomial of $\bA$.
Since $\chi$ is a polynomial of degree $n=\deg(\bA)\geq 1$,
$\chi$ can be interpreted as a holomorphic function $\chi\colon \C\to\C$.
Thus, by \eqref{eq:chareq}, $h=h_{\bA}$ defines a meromorphic function
$h\colon\C\to\bar{\C}$. In particular, since $\lambda_1^{-1},\cdots,\lambda_n^{-1}\in\C$
are the roots of $\chi$, they are also the poles of $h$.
Hence, if $\lambda_j$ is an eigenvalue of $\bA$ of maximal absolute value,
$\lambda_j^{-1}$ is a pole of $h$ closest to $0$.
This implies that $\lambda_{\max}^{-1}=|\lambda^{-1}_j|=\rho$ coincides with the
convergence radius of the power series $h$, i.e., 
$\lambda_{\max}=\limsup_{n\to\infty}\sqrt[n]{\dim(\bA_n)}=\boh(\bA)$.
Since all coefficients of the power series $h$ are non-negative, one has
\begin{equation}
\label{eq:ineq1}
|h(z)|\leq\textstyle{\sum_{k\geq 0} \dim(\bA_k)\cdot |z|^k=h(|z|)}
\end{equation}
for all $z\in \C$ with $|z|<\rho$. Hence $\rho=|\lambda_j^{-1}|$ is a pole
of $h$ and thus must coincide with one of the elements 
$\lambda_1^{-1},\cdots,\lambda_n^{-1}$, i.e., there exists $i\in\{1,\ldots,n\}$ such
that $\lambda_i=|\lambda_j|=\rho^{-1}=\boh(\bA)$. 
As $\lambda_1\cdots\lambda_n=c_{\bA}$, one has $\lambda_{\max}\geq 1$.
\end{proof}

\begin{prop}
\label{prop:lmax1}
Let $\bA$, $\bA\not=\F$, be a $\chi$-finite algebra 
satisfying $\boh(\bA)=1$. Then
all eigenvalues $\lambda_1,\ldots,\lambda_n$, $n=\deg(\bA)$, are roots of unity.
\end{prop}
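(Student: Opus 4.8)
The plan is to reduce the statement to Kronecker's classical theorem: a nonzero algebraic integer all of whose Galois conjugates lie in the closed unit disc is a root of unity. The three ingredients needed are that the $\lambda_i$ are algebraic integers, that the set $\{\lambda_1,\dots,\lambda_n\}$ is closed under Galois conjugation, and that $|\lambda_i|\le 1$ for all $i$; only the last of these uses the hypothesis $\boh(\bA)=1$.

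First I would pass from $\chi_{\bA}(y)$ to its reciprocal polynomial. Since $\bA$ is $\chi$-finite, $\chi_{\bA}(y)$ is a polynomial, and by \eqref{eq:powerser} it lies in $\Z[y]$; by \eqref{eq:charp1} (or by \eqref{eq:chareq}, using $h_{\bA}(0)=\dim(\bA_0)=1$) its constant term equals $1$. Writing $\chi_{\bA}(y)=1+c_1y+\cdots+c_ny^n$ with $c_i\in\Z$ and $c_n=(-1)^nc_{\bA}\ne 0$, the polynomial
\[
P(y):=y^n\,\chi_{\bA}(1/y)=y^n+c_1y^{n-1}+\cdots+c_n\in\Z[y]
\]
is monic, and by \eqref{eq:charp1} its roots are precisely $\lambda_1,\dots,\lambda_n$. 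Hence each $\lambda_i$ is an algebraic integer; moreover $\lambda_i\ne 0$, since the product $\prod_{i=1}^n(1-\lambda_iy)$ in \eqref{eq:charp1} has degree $n=\deg(\bA)$, which forces every factor to be genuinely linear.

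Next, the minimal polynomial over $\Q$ of any $\lambda_i$ divides $P(y)$ in $\Q[y]$, hence, by Gauss's lemma, in $\Z[y]$; therefore every Galois conjugate of $\lambda_i$ again occurs among $\lambda_1,\dots,\lambda_n$, so the set of eigenvalues is stable under $\Gal(\overline{\Q}/\Q)$. On the other hand, Proposition~\ref{prop:lmaxent} identifies $\boh(\bA)$ with $\lambda_{\max}=\max_j|\lambda_j|$, so the hypothesis $\boh(\bA)=1$ gives $|\lambda_j|\le 1$ for every $j$. Thus each $\lambda_i$ is a nonzero algebraic integer all of whose conjugates lie in the closed unit disc, and Kronecker's theorem lets us conclude that $\lambda_i$ is a root of unity. (Recall the standard argument: there are only finitely many algebraic integers of degree at most $n$ all of whose conjugates lie in the closed unit disc, since their monic minimal polynomials have bounded degree and coefficients bounded by binomial coefficients; as all powers $\lambda_i,\lambda_i^2,\lambda_i^3,\dots$ are such algebraic integers, two of them coincide, say $\lambda_i^j=\lambda_i^k$ with $j>k\ge 1$, whence $\lambda_i^{j-k}=1$.) I do not anticipate any genuine obstacle here; the only point demanding a little care is verifying that $\chi_{\bA}$ is normalised so that $P$ comes out monic — i.e. that the constant term of $\chi_{\bA}$ is a unit — which is precisely what makes the $\lambda_i$ algebraic integers rather than merely algebraic numbers.
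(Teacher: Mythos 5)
Your proof is correct and follows essentially the same route as the paper: both arguments observe that the $\lambda_i$ are nonzero algebraic integers (roots of the monic reciprocal polynomial $y^n\chi_{\bA}(1/y)\in\Z[y]$), that the set of eigenvalues is Galois-stable, and that all conjugates lie in the closed unit disc by Proposition~\ref{prop:lmaxent}, before invoking Kronecker's theorem (the paper phrases this last step as $\kernel(\beta)\cap\caO^\times=\mu(K_{\bA})$, citing Neukirch, which is the same statement). Your explicit check that the constant term of $\chi_{\bA}$ equals $1$, so that the reciprocal polynomial is monic, is a point the paper glosses over.
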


\begin{proof}
By definition, $K_{\bA}/\Q$ is a Galois extension. Let $G=\Gal(K_{\bA}/\Q)$
denote its Galois group. 
In particular, $G$ acts on the set $\{\,\lambda_1,\ldots,\lambda_n\}$.
Let $\iota_k\colon K_{\bA}\to\C$, $1\leq k\leq r$, $\iota_1=r$, denote the different embeddings of $K_{\bA}$ 
into the field 
of complex numbers, and let $|\argu|_j=|\argu|\circ\iota_j\colon K_{\bA}\to\R_{\geq 0}$ denote the 
associated absolute values. Then one has a homomorphism of groups
\begin{equation}
\label{eq:abs}
\textstyle{\beta=\coprod_{1\leq k\leq r}|\argu|_j\colon K^\times_{\bA}\longrightarrow\coprod_{1\leq k\leq r} \R_{>0}},
\end{equation}
where $K^\times_{\bA}$ denotes the multiplicative group of the field $K_{\bA}$.

By construction, $f(y)=y^n\chi_{\bA}(1/y)\in\Z[y]$ and $f(\lambda_j)=0$ for all $j\in\{1,\ldots,n\}$.
Hence $\lambda_j\in\caO$, where $\caO$ denotes the integral closure of $\Z$ in $K_{\bA}$.
As $\lambda_1\cdots\lambda_n=c_{\bA}$ and $\lambda_{\max}=1$, one has $|c_{\bA}|\leq 1$. 
Since $c_{\bA}$ is a non-trivial integer,
this implies that $c_{\bA}\in\{\pm 1\}$, and therefore, $|\lambda_j|=1$ for all $j\in\{1,\ldots,n\}$.
Moreover, as $\lambda_1\cdots\lambda_n\in\{\pm 1\}$, one has that $\lambda_j\in\caO^\times$,
where $\caO^\times$ denotes the group of units in the ring $\caO$.
Since $K_{\bA}/\Q$ is a Galois extension, for any $k\in\{1,\ldots,r\}$ there exists $g_k\in G$ such that
$\iota_k=\iota\circ g_k$. Hence
\begin{equation}
\label{eq:muK}
|\lambda_j|_k=|g_k(\lambda_j)|=1.
\end{equation}
Hence $\lambda_j\in\kernel(\beta)\cap\caO^\times=\mu(K_{\bA})$,
where $\mu(K_{\bA})$ denotes the group of roots of unity in the number field $K_{\bA}$
(cf. \cite[Chap. I, \S 7, Thm.~1]{neu:alg}). This yields the claim.
\end{proof}

\begin{rem}
\label{rem:FP}
Let $\bA$ be an $\N_0$-graded, connected algebra of finite type which is of type FP$_\infty$.
Then there is also
another type of power series one studies in this context, i.e., 
the power series
\begin{equation}
\label{eq:poinser}
p_{\bA}(x)=\sum_{s\geq 0} (-1)^s\cdot\dim(H^s(\bA,\F))\cdot x^s\in\Z\dbl x\dbr
\end{equation}
is called the {\it Poincar\'e series} of $\bA$, i.e., one has $p_{\bA}(x)=\tchi_{\bA}(x,1)$
(cf. \eqref{eq:powerser}). If $\bA$ is additionally of type FP, then $p_{\bA}(1)=\chi_{\bA}(1)$
is also called the {\it Euler-Poincar\'e characteristic} of $\bA$, i.e., $\bA$ has Euler-Poincar\'e 
characteristic $0$ if, and only if, $1$ is an eigenvalue of $\bA$.
\end{rem}


\subsection{Graded algebras generated in degree $1$}
\label{ss:gendeg1}
Let $V$ be a graded vector space concentrated in degree $1$, i.e., $V_s=0$ for $s\not=1$.
The $\N_0$-graded algebra $\boT(V)$
has the property that for every $\N_0$-graded algebra $\bA$
and for every homomorphism of vector spaces $\phi\colon V\to\bA_1$
there exists a unique homomorphism of $\N_0$-graded algebras
$\phi_\bullet\colon\boT(V)\to\bA$ such that $\phi_1=\phi$.
The $\N_0$-graded algebra $\bA$ is said to be {\it generated in degree $1$}, if
$\iid_\bullet\colon\boT(\bA_1)\to\bA$ is surjective.
In particular, such an $\N_0$-graded algebra must be connected,
and multiplication induces a surjective map
$\bA_m\otimes\bA_n\to\bA_{n+m}$ for all $m,n\geq 0$, i.e.,
$\dim(\bA_{m+n})\leq\dim(\bA_m)\cdot\dim(\bA_n)$
for all $m,n\geq 0$.
One has the following properties.

\begin{fact}
\label{fact:onegen}
Let $\bA$ be an $\N_0$-graded, connected algebra.
\begin{itemize}
\item[(a)] $\bA$ is generated in degree $1$ if, and only if, $\Ext^{1,t}_{\bA}(\F,\F)=0$ for all $t\geq 2$.
\item[(b)] Suppose $\bA$ is generated in degree $1$. Then $\bA$ is finitely generated if, and only if,
it is of finite type.
\item[(c)] If $\bA$ is finitely generated and generated in degree $1$, then
$\lim_{k\to\infty} \sqrt[k]{\dim(\bA_k)}$ exists and is equal to $\boh(\bA)$.
Moreover, $\boh(\bA)\leq \dim(\bA_1)$.
\end{itemize}
\end{fact}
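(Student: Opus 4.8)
The plan is to obtain part~(a) from the cohomological identification in Proposition~\ref{prop:ident}, and then to deduce parts~(b) and~(c) by elementary means, the former from Remark~\ref{rem:tensor}(b), the latter from a submultiplicativity (Fekete) argument. For~(a), I would specialize Proposition~\ref{prop:ident}(a) to $s=1$: since $\der_0=\eps$ and $\kernel(\eps)=\bA^+$, one gets $\Ext^{1,t}_{\bA}(\F,\F)\simeq(\bA^+_{\bA,t})^\ast$ for every $t$, so that $\Ext^{1,t}_{\bA}(\F,\F)=0$ for all $t\geq 2$ holds if, and only if, the graded vector space $\bA^+_{\bA}=\bA^+/\bA^+\bA^+$ is concentrated in degree $1$. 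It then remains to match this condition with the surjectivity of $\iid_\bullet\colon\boT(\bA_1)\to\bA$. In internal degree $k$ the image of $\iid_\bullet$ is $\bA_1^k$, the span of the $k$-fold products from $\bA_1$; if $\iid_\bullet$ is onto, then $\bA_k=\bA_1^k\subseteq(\bA^+\bA^+)_k$ for $k\geq 2$, so $\bA^+_{\bA}$ is concentrated in degree $1$ (in degree $1$ one always has $\bA^+_{\bA,1}=\bA_1$, as $(\bA^+\bA^+)_1=0$). Conversely, if $\bA^+_{\bA}$ is concentrated in degree $1$, then $\bA_k=(\bA^+\bA^+)_k=\sum_{1\leq i\leq k-1}\bA_i\bA_{k-i}$ for $k\geq 2$, and a straightforward induction on $k$ yields $\bA_k\subseteq\bA_1^k$; hence $\iid_\bullet$ is surjective.

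For~(b): if $\bA$ is of finite type, then $\dim(\bA_1)<\infty$, and since $\bA$ is generated in degree $1$, the proof of~(a) shows $\bA^+_{\bA}$ is concentrated in degree $1$, so $\bA^+_{\bA}=\bA_1$ is finite-dimensional; hence $\bA$ is finitely generated by Remark~\ref{rem:tensor}(b). The converse---that every finitely generated $\N_0$-graded connected algebra is of finite type---is already recorded in Remark~\ref{rem:tensor}(b), and does not even use the hypothesis that $\bA$ is generated in degree $1$.

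For~(c): by~(b) the algebra $\bA$ is of finite type, so $a_k:=\dim(\bA_k)<\infty$ for all $k$ and $a_1=\dim(\bA_1)<\infty$; and since $\bA$ is generated in degree $1$, the submultiplicativity $a_{m+n}\leq a_ma_n$ noted before the statement of the Fact applies. If $a_k=0$ for some $k$, then $a_j=0$ for all $j\geq k$, so $\bA$ is finite-dimensional, $\sqrt[k]{a_k}\to 0$, and $\boh(\bA)=0$; the assertion is then clear. Otherwise $a_k\geq 1$ for all $k$, and $b_k:=\log a_k\geq 0$ defines a subadditive sequence $(b_k)_{k\geq 1}$; Fekete's lemma gives that $\lim_{k\to\infty} b_k/k$ exists and equals $\inf_{k\geq 1} b_k/k$, which is $\leq b_1=\log\dim(\bA_1)$. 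Exponentiating, $\lim_{k\to\infty}\sqrt[k]{\dim(\bA_k)}$ exists and is $\leq\dim(\bA_1)$; being equal to the corresponding $\limsup$, it coincides with $\boh(\bA)$.

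I do not anticipate a serious obstacle. The only points that require some care are the two implications in~(a)---correctly translating "generated in degree $1$" into the degree-$1$ concentration of the indecomposable quotient $\bA^+_{\bA}$---and the degenerate cases in~(c), namely a vanishing entry in the Hilbert series (which forces $\bA$ to be finite-dimensional) and the resulting value $\boh(\bA)=0$. The single analytic ingredient, Fekete's subadditive lemma, is entirely standard.
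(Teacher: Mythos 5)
Your proposal is correct and follows exactly the route the paper intends: the paper dismisses (a) and (b) as straightforward and for (c) cites P\'olya--Szeg\H{o} (Part I, Problem 98), which is precisely the Fekete-type submultiplicativity argument you spell out, including the degenerate case $\dim(\bA_k)=0$. The only cosmetic caveat is that Proposition~\ref{prop:ident} is stated for algebras of finite type while Fact~\ref{fact:onegen}(a) is not, but the identification you actually need, $\Ext^{1,t}_{\bA}(\F,\F)\simeq(\bA^+_{\bA,t})^\ast$ as a vanishing criterion, holds without that hypothesis (cf. Remark~\ref{rem:tensor}(b)), so nothing is lost.
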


\begin{proof} (a) and (b) are straightforward. For (c) see
 \cite[Part I, Prob.~98, p.~23]{psz:analysis}. 
\end{proof}


\subsection{Quadratic algebras}
\label{ss:quad}
Let $\bA$ be an $\N_0$-graded algebra which is generated in degree $1$.
Then $\bA$ is said to be {\it quadratic}, if
\begin{equation}
\label{eq:quad}
\kernel(\iid_\bullet)=\langle\bor_2(\bA)\rangle=
\boT(\bA_1)\otimes \bor_2(\bA)\otimes\boT(\bA_1),
\end{equation}
where $\bor_2(\bA)=\kernel(\iid_2)\subseteq\bA_1\otimes\bA_1=\boT_2(\bA_1)$.
From Remark \ref{rem:tensor}(c) one concludes the following.

\begin{fact}
\label{fact:quad}
Let $\bA$ be an $\N_0$-graded, connected algebra which is generated
in degree $1$. Then $\bA$ is quadratic if, and only if, $\Ext^{2,t}_{\bA}(\F,\F)=0$ for all $t\geq 3$.
\end{fact}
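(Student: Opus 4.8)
The plan is to deduce this from Remark~\ref{rem:tensor}(c) applied to the presentation $\tau_\bullet=\iid_\bullet\colon\boT(\bA_1)\to\bA$, which is a legitimate choice precisely because $\bA$ is generated in degree $1$; under it $\bA^+_{\bA}$ is identified with $\bA_1$ concentrated in internal degree $1$, $\kernel(\tau_\bullet)=\kernel(\iid_\bullet)$, and since $\iid_1$ is the identity of $\bA_1$ one has $\kernel(\iid_\bullet)_t=0$ for $t\leq 1$ and $\kernel(\iid_\bullet)_2=\bor_2(\bA)$. By Remark~\ref{rem:tensor}(c), $\Ext^{2,t}_{\bA}(\F,\F)$ is the $\F$-linear dual of $\bor(\tau_\bullet)_{\bA,t}$, so $\Ext^{2,t}_{\bA}(\F,\F)=0$ if and only if $\bor(\tau_\bullet)_{\bA,t}=0$ (a vector space vanishes iff its dual does, so no finiteness hypothesis is needed); and by \eqref{eq:FP2}, $\bor(\tau_\bullet)_{\bA}\simeq\kernel(\iid_\bullet)/(\bA_1\kernel(\iid_\bullet)+\kernel(\iid_\bullet)\bA_1)$, the products being formed inside $\boT(\bA_1)$. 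Since $\bA_1$ lives in internal degree $1$, the degree-$t$ component of $\bA_1\kernel(\iid_\bullet)+\kernel(\iid_\bullet)\bA_1$ equals $\bA_1\cdot\kernel(\iid_\bullet)_{t-1}+\kernel(\iid_\bullet)_{t-1}\cdot\bA_1$. Hence $\Ext^{2,t}_{\bA}(\F,\F)=0$ for all $t\geq 3$ is equivalent to
\begin{equation*}
\kernel(\iid_\bullet)_t=\bA_1\cdot\kernel(\iid_\bullet)_{t-1}+\kernel(\iid_\bullet)_{t-1}\cdot\bA_1\qquad\text{for all }t\geq 3,
\end{equation*}
a condition I will denote $(\star)$.

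It then remains to check that $(\star)$ is equivalent to quadraticity, i.e. to $\kernel(\iid_\bullet)=\langle\bor_2(\bA)\rangle$ in the sense of \eqref{eq:quad}. The inclusion $\langle\bor_2(\bA)\rangle\subseteq\kernel(\iid_\bullet)$ is automatic, as $\bor_2(\bA)=\kernel(\iid_\bullet)_2$ and $\kernel(\iid_\bullet)$ is a two-sided ideal. Assuming $(\star)$, one proves the reverse inclusion in each internal degree by induction on $t$: it is clear for $t\leq 2$, while for $t\geq 3$, using $(\star)$ and the inductive hypothesis,
\begin{equation*}
\kernel(\iid_\bullet)_t=\bA_1\cdot\kernel(\iid_\bullet)_{t-1}+\kernel(\iid_\bullet)_{t-1}\cdot\bA_1\subseteq\bA_1\cdot\langle\bor_2(\bA)\rangle_{t-1}+\langle\bor_2(\bA)\rangle_{t-1}\cdot\bA_1\subseteq\langle\bor_2(\bA)\rangle_t.
\end{equation*}
Conversely, if $\bA$ is quadratic, then a homogeneous element of $\kernel(\iid_\bullet)$ of degree $t\geq 3$ is a sum of products $a\,r\,b$ with $r\in\bor_2(\bA)$ and $a,b\in\boT(\bA_1)$ of total degree $t-2\geq 1$; splitting off a degree-$1$ factor from the left of $a$ when $\deg a\geq 1$, or from the right of $b$ otherwise, exhibits $a\,r\,b$ as an element of $\bA_1\kernel(\iid_\bullet)+\kernel(\iid_\bullet)\bA_1$, which gives $(\star)$.

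I expect the only delicate point to be the bookkeeping in the first paragraph: tracking the internal grading through the identifications $\bA^+_{\bA}\cong\bA_1$ and $\bor(\tau_\bullet)_{\bA}\cong\Tor^{\bA}_2(\F,\F)$ of Remark~\ref{rem:tensor}(c), and verifying that no relation of degree below $t-1$ contributes to the degree-$t$ part of $\bA_1\kernel(\iid_\bullet)+\kernel(\iid_\bullet)\bA_1$. Once that is settled, the equivalence of $(\star)$ with \eqref{eq:quad} is the elementary induction of the previous paragraph.
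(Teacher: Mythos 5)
Your proof is correct and follows exactly the route the paper intends: the paper offers no written argument for this fact beyond the pointer to Remark~\ref{rem:tensor}(c), and your text is precisely the elaboration of that pointer, applying \eqref{eq:FP2} and the identification $\Ext^{2,\bullet}_{\bA}(\F,\F)\simeq\bor(\tau_\bullet)_{\bA}^\ast$ to the presentation $\iid_\bullet\colon\boT(\bA_1)\to\bA$. The degree bookkeeping and the induction showing that the vanishing of $\Ext^{2,t}_{\bA}(\F,\F)$ for $t\geq 3$ is equivalent to $\kernel(\iid_\bullet)=\langle\bor_2(\bA)\rangle$ are exactly the details the paper leaves implicit, and they are carried out correctly.
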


Let $\bA$ be a quadratic algebra of finite type, and put
\begin{equation}
\label{eq:quaddual1}
\bor_2(\bA)^\bot=\{\,c\in\bA_1^\ast\otimes\bA_1^\ast\mid\langle c,a\rangle=0\ \text{for all $a\in \bor_2(\bA)$}\,\},
\end{equation}
where $\argu^\ast=\Hom_{\F}(\argu,\F)$, and
$\langle\argu,\argu\rangle\colon (\bA_1^\ast\otimes\bA_1^\ast)\otimes\bA_1\otimes\bA_1\longrightarrow \F$
denotes the evaluation homomorphism.
Then $\bA^!=\boT(\bA_1^\ast)/\langle \bor_2(\bA)^\bot\rangle$ is a quadratic algebra which is
called the {\it quadratic dual of $\bA$}. By construction, one has a
natural isomorphism $(\bA^{!})^!\simeq\bA$.


\subsection{Koszul algebras}
\label{ss:koszul}
A quadratic algebra of finite type $\bA$ is said to be {\it Koszul},
if $\Ext^{s,t}_{\bA}(\F,\F)=0$ for $s\not=t$. By definition, $\bA$ is of type FP$_\infty$,
and $\chi_{\bA}(y)=p_{\bA}(y)$ (cf. \eqref{eq:poinser}).
Koszul algebras were introduced by S.B.~Priddy in \cite{prid:kosz}, where he showed that
for such algebras
one has an isomorphism
\begin{equation}
\label{eq:kosz1}
\bA^!\simeq\diag(\Ext_{\bA}^{\bullet,\bullet}(\F,\F));
\end{equation}
in particular, $\chi_{\bA}(y)=p_{\bA}(y)=h_{\bA^!}(-y)$.
An $\F$-Koszul algebra is of finite cohomological dimension if, and only if,
$\bA$ is $\chi$-finite. In this case one has $\deg(\bA)=\ccd(\bA)$.
If $\bA$ is an $\F$-Koszul algebra of finite cohomological dimension $d=\deg(\bA)$, then
\begin{equation}
\chi_{\bA}(y)=p_{\bA}(y)=1-b_1\cdot y+b_2\cdot y^2+\cdots (-1)^d b_d\cdot y^d
\end{equation}
for positive integers $b_j\geq 1$.
Hence by R.~Descartes' rule of signs one concludes the following:

\begin{fact}
\label{fact:desrule}
Let $\bA$ be an Koszul algebra of finite cohomological dimension.
Then every real eigenvalue of $\bA$ must be positive.
In particular, $-1$ is not an eigenvalue of $\bA$.
\end{fact}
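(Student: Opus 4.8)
The plan is to combine the Koszul identity $\chi_{\bA}(y)=p_{\bA}(y)$ with the explicit sign pattern of the coefficients of $p_{\bA}(y)$ and then invoke Descartes' rule of signs. First I would recall from \S\ref{ss:koszul} that for a Koszul algebra $\bA$ of finite cohomological dimension $d=\deg(\bA)=\ccd(\bA)$ one has, by Priddy's isomorphism \eqref{eq:kosz1},
\[
\chi_{\bA}(y)=p_{\bA}(y)=h_{\bA^!}(-y)=\sum_{j=0}^{d}(-1)^j b_j\, y^j,
\qquad b_j=\dim(\bA^!_j)=\dim\Ext^{j,j}_{\bA}(\F,\F).
\]
The crucial point is that each $b_j$ is a \emph{strictly positive} integer for $0\le j\le d$: indeed $b_0=1$, $b_d\ne 0$ since $d=\ccd(\bA)$, and for the intermediate indices one uses that $\bA^!$ is itself Koszul, hence generated in degree $1$, so $\bA^!_j=0$ would force $\bA^!_k=0$ for all $k\ge j$ (multiplication $\bA^!_1\otimes\bA^!_{j-1}\to\bA^!_j$ being surjective in the generated-in-degree-$1$ case), contradicting $b_d\ne 0$. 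Thus the coefficient sequence of $\chi_{\bA}(y)$ is $1,-b_1,b_2,\ldots,(-1)^d b_d$ with all $b_j\ge 1$, which is exactly the display already written in the excerpt just before the statement.

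Next I would apply Descartes' rule of signs. Since consecutive coefficients strictly alternate in sign, the number of sign changes in the coefficient sequence of $\chi_{\bA}(y)$ is exactly $d$. Descartes' rule says the number of positive real roots of $\chi_{\bA}(y)$, counted with multiplicity, is $d$ minus a non-negative even integer; but $\chi_{\bA}(y)$ has degree $d$ and hence at most $d$ real roots altogether, so in fact \emph{all} $d$ roots are real and positive. Equivalently, applying the rule the other way: $\chi_{\bA}(-y)=\sum_{j}b_j y^j$ has all coefficients positive, hence no sign changes, hence no positive real root, i.e.\ $\chi_{\bA}(y)$ has no negative real root. Either formulation gives that every real eigenvalue $\lambda_i$ (a root of $\chi_{\bA}$ is $\lambda_i^{-1}$, so $\lambda_i$ real iff $\lambda_i^{-1}$ real, and of the same sign) must be positive. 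In particular $-1$, whose reciprocal is $-1<0$, cannot be an eigenvalue.

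I expect the only genuine subtlety to be the positivity of the \emph{intermediate} Betti numbers $b_j$ for $1\le j\le d-1$; a priori Descartes' rule would still be applicable to a sequence with some zero coefficients, but then one only gets that real roots are \emph{non-negative}, which does not by itself exclude $-1$ — except that $-1$ would then be a root of $\chi_{\bA}(-y)=\sum b_j y^j$ with all $b_j\ge 0$ and $b_0=1$, which is impossible since $\chi_{\bA}(-(-1))=\chi_{\bA}(1)=\sum(-1)^j b_j$ need not vanish; more directly, $\chi_{\bA}(1)\cdot h_{\bA}(1)$ relation and positivity of $h$ could be invoked. So even without the intermediate-positivity argument the statement "$-1$ is not an eigenvalue" survives; but for the cleaner assertion that every real eigenvalue is positive, the step establishing $b_j\ge 1$ via the generated-in-degree-$1$ property of $\bA^!$ is the one that must be spelled out carefully, and I would make sure to cite Fact~\ref{fact:onegen}(a) together with \eqref{eq:kosz1} at that point.
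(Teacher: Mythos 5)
Your approach is the same as the paper's: the proof there consists precisely of recording that $\chi_{\bA}(y)=p_{\bA}(y)=1-b_1y+b_2y^2+\cdots+(-1)^db_dy^d$ with all $b_j\geq 1$ and then invoking Descartes' rule of signs, and your justification of $b_j\geq 1$ (via $b_j=\dim(\bA^!_j)$, the generated-in-degree-$1$ property of $\bA^!$, and $b_d\neq 0$ from $\ccd(\bA)=d$) correctly supplies the one detail the paper leaves implicit. There is, however, one genuine error in your write-up: the first application of Descartes' rule, where you conclude that \emph{all} $d$ roots of $\chi_{\bA}$ are real and positive. This does not follow -- the rule gives only that the number of positive real roots is $d$ minus a non-negative even integer, and ``at most $d$ real roots in total'' is perfectly compatible with, say, $d-2$ positive real roots together with a conjugate pair of non-real roots. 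The claim is in fact false: Ex.~\ref{ex:RAL}(d) exhibits a Koszul algebra of cohomological dimension $4$ with two non-real eigenvalues, and Question~\ref{ques:reparteig} would be vacuous if all eigenvalues of a Koszul algebra were real. Your second formulation -- $\chi_{\bA}(-y)$ has all coefficients positive, hence no sign changes, hence no positive real root, so $\chi_{\bA}$ has no negative real root, and $0$ is not a root since $\chi_{\bA}(0)=1$ -- is the correct argument and proves exactly the stated Fact (a real eigenvalue $\lambda_i$ has real reciprocal root $\lambda_i^{-1}$ of the same sign). Delete the first formulation; the two are not ``equivalent'', and only the second is valid. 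With that excision the proof is correct and matches the paper's.
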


There exist Koszul algebras of finite cohomological dimension
with non-real eigenvalues (cf. Ex.~\ref{ex:RAL}(d)).
Nevertheless, the author could not find any example settling the following
question.

\begin{ques}
\label{ques:reparteig}
Does there exist a Koszul algebra of finite cohomological dimension
with an eigenvalue $\lambda$ satisfying $\Repa(\lambda)<0$?
\end{ques}

By definition, every quadratic $\F$-algebra $\bA$ of finite type 
satisfying $\ccd(\bA)\leq 2$ is Koszul. Such an algebra has two eigenvalues.
By Proposition~\ref{prop:lmaxent}, one of it is a positive real number. 
Hence the other is real as well, and, by Fact~\ref{fact:desrule}, it is also positive.
From this fact one concludes the following.

\begin{cor}
\label{cor:golod}
Let $\bA$ be a 
Koszul algebra of finite cohomological dimension less or equal to $2$.
Then the eigenvalues are positive real numbers, and 
\begin{equation}
\label{eq:gosha}
\dim( \Ext_{\bA}^{2,2}(\F,\F))\leq \frac{\dim(\Ext^{1,1}_{\bA}(\F,\F))^2}{4}.
\end{equation}
\end{cor}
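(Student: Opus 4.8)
The plan is to exploit the factored form $\chi_{\bA}(y)=(1-\lambda_1 y)(1-\lambda_2 y)$ together with the constraints already established for Koszul algebras of low cohomological dimension. First I would record that since $\ccd(\bA)\le 2$ and $\bA$ is quadratic of finite type, $\bA$ is Koszul, so $\chi_{\bA}(y)=p_{\bA}(y)$ and
\begin{equation}
\label{eq:chi2}
\chi_{\bA}(y)=1-b_1 y+b_2 y^2,\qquad b_1=\dim(\Ext^{1,1}_{\bA}(\F,\F)),\quad b_2=\dim(\Ext^{2,2}_{\bA}(\F,\F)),
\end{equation}
with $b_1,b_2\ge 1$ positive integers (this is the displayed formula preceding Fact~\ref{fact:desrule}, specialized to $d=2$; the case $\ccd(\bA)=0$ or $1$ forces $\bA=\F$ or a polynomial ring and \eqref{eq:gosha} holds trivially with $b_2$ interpreted as $0$, so I may assume $\ccd(\bA)=2$). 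Comparing \eqref{eq:chi2} with $\chi_{\bA}(y)=(1-\lambda_1 y)(1-\lambda_2 y)$ gives $\lambda_1+\lambda_2=b_1$ and $\lambda_1\lambda_2=b_2$, both real.

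Next I would show the eigenvalues are positive real. By Proposition~\ref{prop:lmaxent}, one eigenvalue, say $\lambda_1$, equals $\boh(\bA)=\lambda_{\max}>0$, hence is a positive real number. Since $\lambda_1+\lambda_2=b_1\in\R$, the other eigenvalue $\lambda_2=b_1-\lambda_1$ is real as well. By Fact~\ref{fact:desrule}, every real eigenvalue of a Koszul algebra of finite cohomological dimension is positive, so $\lambda_2>0$ too. (Alternatively: $\lambda_1\lambda_2=b_2\ge 1>0$ forces $\lambda_2$ to have the same sign as $\lambda_1$.)

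Finally, the inequality \eqref{eq:gosha} is just the AM–GM inequality applied to the two positive reals $\lambda_1,\lambda_2$: from $\lambda_1+\lambda_2=b_1$ and $\lambda_1\lambda_2=b_2$ one gets
\begin{equation}
b_2=\lambda_1\lambda_2\le\left(\frac{\lambda_1+\lambda_2}{2}\right)^2=\frac{b_1^2}{4},
\end{equation}
which upon substituting the cohomological interpretations of $b_1$ and $b_2$ is exactly the claimed bound. There is no real obstacle here; the only point requiring a little care is the reduction to the case $\ccd(\bA)=2$ (so that $\chi_{\bA}$ genuinely has degree $2$ and the formula \eqref{eq:chi2} with $b_1,b_2\ge 1$ applies), and the invocation of Fact~\ref{fact:desrule} to rule out a negative second eigenvalue — though as noted the sign of $b_2$ already does this, so the argument is self-contained modulo Proposition~\ref{prop:lmaxent}.
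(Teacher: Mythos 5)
Your proposal is correct and follows essentially the same route as the paper: the paragraph preceding the corollary establishes that one eigenvalue is a positive real by Proposition~\ref{prop:lmaxent}, hence the other is real and, by Fact~\ref{fact:desrule} (or the positivity of the product $b_2$), also positive, after which \eqref{eq:gosha} is the AM--GM inequality applied to $\lambda_1\lambda_2=b_2$ and $\lambda_1+\lambda_2=b_1$. Your explicit handling of the degenerate cases $\ccd(\bA)\leq 1$ is a small but welcome addition the paper leaves implicit.
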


Hence Koszul algebra of cohomological dimension less or equal to $2$
satisfies the Golod-Shafarevich inequality (cf. \cite[\S I, App.~2]{ser:gal}) in the opposite direction.
Corollary~\ref{cor:golod} applied to right-angled Artin algebras (cf. \S\ref{sss:RAL}) yields
an alternative proof of W.~Mantel's theorem on the number of edges in a triangle free graph
(cf. \cite{mant:gr}). This result is a special case of a more general result due to P.~Tur\'an
(cf. \cite{turan:gr}). The natural question is whether there exists an analogue of Tur\'an's theorem
also in the context of Koszul algebras.

\begin{ques}
\label{ques:turan}
Let $\bA$ be a Koszul algebra of cohomological dimension $d$. Is it true that
\begin{equation}
\label{eq:gosha2}
\dim( \Ext_{\bA}^{2,2}(\F,\F))\leq \frac{d-1}{2d}\cdot\dim(\Ext^{1,1}_{\bA}(\F,\F))^2?
\end{equation}
\end{ques}


\section{Graded Lie algebras of finite type}
\label{s:grlie}
Let $\bL=\coprod_{k\geq 1}\bL_k$ be an $\N$-graded Lie algebra,
i.e., $[\bL_n,\bL_m]\subseteq\bL_{n+m}$. Then its universal eneveloping algebra
$\caU(\bL)$ is an $\N_0$-graded,
connected algebra. Moreover, $\bL$ is of finite type if, and only if, $\caU(\bL)$ is of finite type.
We will say that $\bL$ has one of the properties $\euX$ discussed in 
previous section, if $\caU(\bL)$ has the property $\euX$,
e.g., $\bL$ is said to be {\it generated in degree $1$}, if $[\bL,\bL]=\coprod_{k\geq 2}\bL_k$,
and $\bL$ is said to be {\it quadratic} (resp. {\it Koszul}), if $\caU(\bL)$ is quadratic (resp. Koszul).
Hence, if $\bL$ is generated in degree $1$, one has
\begin{equation}
\label{eq:gen1lie}
\bL_k=\overbrace{[\bL_1,[\bL_1,\ldots,[\bL_1,\bL_1]]]}^{\text{$k$-times}}
\end{equation}
In particular, if $\bL$ is of finite type, generated in degree $1$ and $\bL_k=0$ for some $k>1$, then $\bL$
must be finite-dimensional.
If $\caU(\bL)$ is $\chi$-finite (cf. \S \ref{ss:chif}), we will call $\bL$ to be $\chi$-finite,
and will say that the eigenvalues $\lambda_1,\ldots,\lambda_n$, $n=\deg(\caU(\bL))$, of $\caU(\bL)$ are also the eigenvalues of $\bL$. In this case we also put $\deg(\bL)=\deg(\caU(\bL))$.
By Fact~\ref{fact:propco}(c), if $\bL$ is of type FP, then it is also $\chi$-finite.
For an $\N$-graded Lie algebra of finite type, we put $\chi_{\bL}(y)=\chi_{\caU(\bL)}(y)\in\Z\dbl y\dbr$
(cf. \eqref{eq:powerser}).


\subsection{The entropy of a graded Lie algebra}
\label{ss:entropy}
Let $\bL$ be an $\N$-graded Lie algebra of finite type. The
{\it entropy}\footnote{Following \cite{griha:growth} one may consider $\boh(\bL)$ also as the {\it exponential growth rate}
of $\bL$.}
$\boh(\bL)$ of $\bL$ is defined by
\begin{equation}
\label{eq:entLie}
\boh(\bL)=\limsup_{k\to\infty} \sqrt[k]{\dim(\bL_k)}.
\end{equation}
In \cite[Lemma~1]{ber:subexp}, A.E.~Berezny\u\i\ stated the following lemma.

\begin{lem}[A.E.~Berezny\u\i]
\label{lem:brez}
Let $\bL$ be an $\N$-graded Lie algebra of finite type such that $\ell_k=\dim(\bL_k)\geq 1$. Then
$\boh(\bL)=\boh(\caU(\bL))$.
\end{lem}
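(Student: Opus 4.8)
The plan is to translate the combinatorial definition of $\boh$ into a statement about radii of convergence and to compare the two Hilbert series by means of the Poincar\'e--Birkhoff--Witt theorem. Write $\ell_k=\dim(\bL_k)$. By PBW one has, in $\Z\dbl y\dbr$,
\[
h_{\caU(\bL)}(y)=\prod_{k\geq 1}(1-y^k)^{-\ell_k},
\]
the product making sense as a formal power series because only finitely many factors contribute to each coefficient. By the Cauchy--Hadamard formula, $\boh(\bL)^{-1}$ and $\boh(\caU(\bL))^{-1}$ are the radii of convergence of the power series $h_{\bL}(y)=\sum_{k\geq 1}\ell_k y^k$ and $h_{\caU(\bL)}(y)$ respectively, both of which have non-negative coefficients; so it suffices to show that these two radii coincide. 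Denote the radius of $h_{\bL}$ by $r$. Since $\ell_k\geq 1$ for all $k$, the series $h_{\bL}(y)$ diverges at $y=1$, whence $0<r\leq 1$ and $\boh(\bL)\geq 1$.

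First I would treat the inequality $\boh(\caU(\bL))\geq\boh(\bL)$, which is immediate: PBW exhibits $\bL_k$ as a subspace of $\caU(\bL)_k$, so $h_{\caU(\bL)}(y)-h_{\bL}(y)$ has non-negative coefficients and hence radius of convergence at most $r$; equivalently $\boh(\caU(\bL))\geq\boh(\bL)$. In particular, if $\boh(\bL)=\infty$ we are already done, so in what follows we may assume $r>0$.

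For the reverse inequality I would estimate the product directly along the real interval $(0,r)$. For $0<y<r\leq 1$ we have $0\leq y^k<1$, and using the elementary bound $-\log(1-t)\leq t/(1-t)$ valid for $0\leq t<1$,
\[
\log h_{\caU(\bL)}(y)=\sum_{k\geq 1}\ell_k\bigl(-\log(1-y^k)\bigr)\leq\sum_{k\geq 1}\ell_k\frac{y^k}{1-y^k}\leq\frac{1}{1-y}\sum_{k\geq 1}\ell_k\,y^k<\infty,
\]
the last sum being finite because $y<r$. Hence $h_{\caU(\bL)}$ converges on all of $(0,r)$, so its radius of convergence is at least $r$, i.e.\ $\boh(\caU(\bL))\leq\boh(\bL)$. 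Combined with the previous paragraph this gives $\boh(\bL)=\boh(\caU(\bL))$.

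The only delicate point is making the logarithmic estimate legitimate up to the boundary of the disc of convergence, and this is exactly what the standing hypothesis $\ell_k\geq 1$ buys: it forces $r\leq 1$, so $y<r$ already implies $y<1$ and every factor $1-y^k$ is positive. (Without this hypothesis one would have to deal separately with the possibility that $h_{\bL}$ converges at some $y\geq 1$; but then $\ell_k\to 0$, so $\ell_k=0$ for $k\gg 0$ and $\bL$ is finite-dimensional, a case disposed of by hand.) Everything else --- the PBW identity, the Cauchy--Hadamard formula, and the inequality $-\log(1-t)\leq t/(1-t)$ --- is routine.
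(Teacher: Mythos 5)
Your proof is correct. It shares its skeleton with the paper's argument --- both pass through the Poincar\'e--Birkhoff--Witt product formula $h_{\caU(\bL)}(y)=\prod_{k\geq 1}(1-y^k)^{-\ell_k}$, take logarithms, and compare radii of convergence using the fact that $\exp$ has infinite radius, so that $h_{\caU(\bL)}$ and its formal logarithm converge on the same disc. Where you genuinely diverge is in how the two inequalities are obtained. The paper extracts the explicit coefficient formula $b_m=\sum_{d\mid m}\ell_{m/d}/d$ for the logarithm, bounds it by $\bigl(\sum_{d\mid m}1/d\bigr)\cdot s_m$ with $s_m=\sum_{j\leq m}\ell_j$, and then needs two auxiliary inputs: the unconditional Robin inequality to control $\limsup_m\sqrt[m]{\sum_{d\mid m}1/d}$, and Proposition~\ref{prop:sum} to pass from $s_m$ back to $\ell_m$. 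You instead get $\boh(\caU(\bL))\geq\boh(\bL)$ for free from the graded inclusion $\bL\hookrightarrow\caU(\bL)$, and for the reverse inequality you estimate the logarithmic series directly on the real interval $(0,r)$ via $-\log(1-y^k)\leq y^k/(1-y^k)\leq y^k/(1-y)$, which is legitimate precisely because $\ell_k\geq 1$ forces $r\leq 1$. This eliminates both Robin's inequality and the partial-sum lemma and is arguably the cleaner route; the only step you leave implicit is that convergence of the (non-negative) logarithmic series at $y$ implies convergence of $h_{\caU(\bL)}$ at $y$, which is exactly the exponential-radius observation the paper makes explicit, so nothing is missing.
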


As A.E.~Berezny\u\i\ gives a hint, but no complete proof of the lemma stated above, for the convenience of the reader 
we provide a short proof based on the argument given in \cite[Proof of Thm.~1]{kkm:inter}.
The proof will make use of the following property.

\begin{prop}
\label{prop:sum}
Let $(a_j)_{j\geq 1}$ be a sequence of positive integers, and let $s_k=\sum_{1\leq j\leq k} a_j$.
If $\limsup_{k\to\infty}\sqrt[k]{a_k}=\lambda<\infty$, then
$\limsup_{k\to\infty}\sqrt[k]{s_k}=\lambda$.
\end{prop}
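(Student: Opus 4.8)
The plan is to prove the two inequalities $\limsup_{k\to\infty}\sqrt[k]{s_k}\le\lambda$ and $\limsup_{k\to\infty}\sqrt[k]{s_k}\ge\lambda$ separately. The second is immediate: since all $a_j$ are positive integers one has $s_k\ge a_k$, hence $\sqrt[k]{s_k}\ge\sqrt[k]{a_k}$, and taking $\limsup$ gives $\limsup_k\sqrt[k]{s_k}\ge\lambda$. (One also notes $s_k\ge k$, so that $\liminf_k\sqrt[k]{s_k}\ge 1$ and in particular $\lambda\ge 1$, which is harmless but worth recording.)

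For the upper bound, fix $\eps>0$. By definition of $\limsup$ there is an index $N$ with $a_j\le(\lambda+\eps)^j$ for all $j\ge N$. Splitting the sum at $N$, write $s_k=C+\sum_{N\le j\le k}a_j$ where $C=\sum_{1\le j<N}a_j$ is a constant not depending on $k$. Then for $k\ge N$,
\begin{equation}
\label{eq:sumbound}
s_k\le C+\sum_{j=N}^{k}(\lambda+\eps)^j\le C+\frac{(\lambda+\eps)^{k+1}}{\lambda+\eps-1},
\end{equation}
using $\lambda+\eps>1$ to sum the geometric series. The right-hand side of \eqref{eq:sumbound} is bounded above by $D\cdot(\lambda+\eps)^k$ for a suitable constant $D=D(\eps)\ge 1$ (for instance $D=C+\frac{\lambda+\eps}{\lambda+\eps-1}$ works once $k$ is large). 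Taking $k$-th roots gives $\sqrt[k]{s_k}\le\sqrt[k]{D}\cdot(\lambda+\eps)$, and since $\sqrt[k]{D}\to1$ we obtain $\limsup_k\sqrt[k]{s_k}\le\lambda+\eps$. As $\eps>0$ was arbitrary, $\limsup_k\sqrt[k]{s_k}\le\lambda$, and combining the two inequalities finishes the proof.

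The only mild subtlety — the one place to be careful rather than the hard part, since the statement is elementary — is that the argument above tacitly uses $\lambda>1$ when summing the geometric series with ratio $\lambda+\eps$; if $\lambda=1$ one simply takes $\eps$ small enough that $\lambda+\eps<2$, say, and the same estimate \eqref{eq:sumbound} still applies because $\lambda+\eps>1$. The degenerate possibility $\lambda=0$ cannot occur here since the $a_j$ are positive integers, so $\sqrt[k]{a_k}\ge1$ and $\lambda\ge1$; thus no separate case is needed. I expect no genuine obstacle — this is a routine Cauchy–Hadamard style estimate, and the proof is essentially the two displays above together with the observation $s_k\ge a_k$.
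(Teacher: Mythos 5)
Your proof is correct and follows essentially the same route as the paper's: the lower bound is immediate from $s_k\geq a_k$, and the upper bound comes from splitting off finitely many terms and summing the geometric series with ratio $\lambda+\varepsilon>1$ (which is guaranteed since $\lambda\geq 1$). No issues.
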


\begin{proof}
It suffices to show that $\limsup_{k\to\infty} \sqrt[k]{s_k}\leq\lambda$.
By hypothesis, $\lambda\geq 1$. For a given $\varepsilon>0$ one has
$a_k\leq (\lambda+\varepsilon)^k$ for all $k\geq N(\varepsilon)$.
Hence there exists $c(\varepsilon)\in\R_{>0}$ such that
\begin{equation}
\label{eq:lemsum1}
\begin{aligned}
s_k&\leq c(\eps)+\sum_{1\leq j\leq k} (\lambda+\eps)^j=
c(\eps)+\frac{(\lambda+\eps)^{k+1}-(\lambda+\eps)}{\lambda-1+\eps}\\
&\leq c(\eps)+\frac{\lambda+\eps}{\lambda-1+\eps}\cdot (\lambda+\eps)^k.
\end{aligned}
\end{equation}
Hence $\limsup _{k\to\infty} \sqrt[k]{s_k}\leq \lambda+\eps$, and this yields the claim.
\end{proof}

\begin{proof}[Proof of Lemma~\ref{lem:brez}]
Let $h_{\bL}(y)=\sum_{k\geq 1}\ell_k\cdot y^k$ denote the Hilbert series of $\bL$,
and let $h_{\bA}(y)$ denote the Hilbert series
of $\bA=\caU(\bL)$. Let $\sum_{k\geq 1} b_k\cdot y^k=\log(h_{\bA}(y))$
be the formal power series obtained by substituting $u=1-h_{\bA}(y)$ in
the formal power series $\log(1-u)=-\sum_{j\geq 1} \frac{u^j}{j}$.
Then, as $\exp(y)=\sum_{j\geq 0} y^j/j!$ has convergence radius equal to $\infty$,
the formal power series $\log(h_{\bA}(y))$ and $h_{\bA}(y)$ have the same
convergence radius, i.e., one has
$\boh(\bA)=\limsup_{k\to\infty} \sqrt[k]{b_k}$.
From the Poincar\'e-Birkhoff-Witt theorem (cf. \cite[\S1.2]{kkm:inter}), one concludes easily that
$b_k=\sum_{d\vert k} \frac{\ell_{k/d}}{d}$. In particular,
$\ell_k\leq b_k\leq \big(\sum_{d\vert k} \frac{1}{d}\big)\cdot s_k$,
where $s_k=\sum_{1\leq j\leq k}\ell_j$.
Moreover, by the unconditional Robin inequality (cf. \cite{robin:ineq}), one has
$\limsup_{k\to\infty}\sqrt[k]{\sum_{d\vert k} \frac{1}{d}}=1$.
Hence Proposition~\ref{prop:sum} yields the claim.
\end{proof}

Applying Lemma~\ref{lem:brez} to $\N$-graded Lie algebras of finite type
which are generated in degree $1$ one obtains the following.

\begin{cor}
\label{cor:brez1}
Let $\bL\not=0$ be a finitely generated Lie algebra which is generated in degree $1$.
\begin{itemize}
\item[(a)] If $\dim(\bL)<\infty$, then $\boh(\bL)=0$ and $\boh(\caU(\bL))=1$.
\item[(b)] If $\dim(\bL)=\infty$, then $\boh(\bL)=\boh(\caU(\bL))\geq 1$.
\end{itemize}
\end{cor}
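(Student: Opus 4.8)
The plan is to reduce everything to Lemma~\ref{lem:brez} and Fact~\ref{fact:onegen}(c), treating the two cases separately. For part~(a), suppose $\dim(\bL)<\infty$. Since $\bL$ is generated in degree $1$, if $\bL_k=0$ for some $k$ then all higher graded pieces vanish; hence finite-dimensionality forces $\bL_k=0$ for all sufficiently large $k$, so $\dim(\bL_k)=0$ eventually and $\boh(\bL)=\limsup_k\sqrt[k]{\dim(\bL_k)}=0$ by definition~\eqref{eq:entLie}. For the enveloping algebra, $\caU(\bL)$ is a connected algebra generated in degree $1$ which is finitely generated, hence of finite type by Fact~\ref{fact:onegen}(b); by the Poincar\'e--Birkhoff--Witt theorem $h_{\caU(\bL)}(y)=\prod_{k\geq 1}(1-y^k)^{-\dim(\bL_k)}$ is a finite product of rational functions, so $h_{\caU(\bL)}(y)$ is a rational function whose smallest pole is at $y=1$ (coming from the factor $(1-y)^{-\dim(\bL_1)}$, with $\dim(\bL_1)\geq 1$ since $\bL\neq 0$ and $\bL$ is generated in degree $1$). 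Therefore the radius of convergence of $h_{\caU(\bL)}$ equals $1$, and by Fact~\ref{fact:onegen}(c) the limit $\lim_k\sqrt[k]{\dim(\caU(\bL)_k)}=\boh(\caU(\bL))$ exists and equals $1$.

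For part~(b), suppose $\dim(\bL)=\infty$. Since $\bL$ is generated in degree $1$, no graded piece $\bL_k$ can vanish (otherwise $\bL$ would be finite-dimensional by the remark following \eqref{eq:gen1lie}); thus $\ell_k=\dim(\bL_k)\geq 1$ for all $k\geq 1$, and Lemma~\ref{lem:brez} applies verbatim to give $\boh(\bL)=\boh(\caU(\bL))$. It remains to check $\boh(\bL)\geq 1$. This again follows because $\dim(\bL_k)\geq 1$ for all $k$, so $\sqrt[k]{\dim(\bL_k)}\geq 1$ and the $\limsup$ is at least $1$. (Alternatively one may invoke Proposition~\ref{prop:lmaxent}: $\caU(\bL)$ is $\chi$-finite when $\bL$ is, but in general $\caU(\bL)$ need not be $\chi$-finite here, so the direct argument via $\ell_k\geq 1$ is the clean route.)

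I do not expect any genuine obstacle: both parts are essentially bookkeeping with the PBW factorization and the three cited facts. The only point requiring a little care is the assertion in part~(a) that $\boh(\caU(\bL))=1$ rather than merely $\leq\dim(\bL_1)$ — one must actually identify the radius of convergence of the rational function $h_{\caU(\bL)}(y)=\prod_{k}(1-y^k)^{-\ell_k}$ as exactly $1$, i.e.\ verify that no cancellation pushes the smallest pole away from $y=1$. Since every factor $(1-y^k)^{-\ell_k}$ with $\ell_k\geq 0$ contributes only poles on the unit circle or further out, and the factor for $k=1$ genuinely contributes a pole at $y=1$ (as $\ell_1\geq 1$), the radius is precisely $1$; combined with Fact~\ref{fact:onegen}(c) this gives the stated equality. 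This is the step I would write out most carefully.
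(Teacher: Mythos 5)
Your proposal is correct and follows the same route the paper intends: case (b) is exactly an application of Lemma~\ref{lem:brez} (after noting $\dim(\bL_k)\geq 1$ for all $k$ via \eqref{eq:gen1lie}), and case (a) is handled directly through the PBW factorization $h_{\caU(\bL)}(y)=\prod_k(1-y^k)^{-\ell_k}$, whose poles all lie on the unit circle with a genuine pole at $y=1$. The paper gives no further detail, so your write-up simply supplies the bookkeeping it omits.
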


In \cite{szsh:ent}, the authors implicitly assume that every
graded Lie algebra $\bL$ under consideration is infinite dimensional. 
This is the reason why case (a) of Corollary~\ref{cor:brez1} does never occur in their paper.


\subsection{A generalized Witt formula}
\label{ss:witt}
The {\it necklace polynomial}\footnote{The number of 
necklaces of length $k$ made from $r$-colured beads was first 
studied by Col.~C.P.N.~Moreau in 1872 (cf. \cite{mor:neck}). 
The integer $M_k(r)$ equals the number of aperiodic necklaces of length $k$ made from $r$-coloured beads.}
of degree $k$, $k\geq 1$,
is the polynomial given by
\begin{equation}
\label{eq:neck1}
M_k(y)=\frac{1}{k}\sum_{j|k} \mu(k/j)\cdot y^j\in\Q[y],
\end{equation}
where $\mu\colon\N\to\Z$ denotes the {\it M\"obius function}.
E.g., one has
\begin{equation}
\label{eq:neck2}
\begin{aligned}
M_1(y)&=y,\\
M_2(y)&=\frac{1}{2}(y^2-y),\\
M_3(y)&=\frac{1}{3}(y^3-y),\\
M_4(y)&=\frac{1}{4}(y^4-y^2),\ \text{etc.}
\end{aligned}
\end{equation}
In \cite{witt:lie}, E.~Witt showed that the dimension of the $k^{th}$-homogeneous component
of a Lie algebra $\bL$ generated freely by $r$-elements is given by $M_k(r)$.
This result can be generalized in the following way.

\begin{thm}
\label{thm:witt}
Let $\bL$ be a $\chi$-finite $\N$-graded Lie algebra,
let $n=\deg(\bL)$, and let $\lambda_1,\ldots,\lambda_n$ denote
the eigenvalues of $\bL$. Then
\begin{equation}
\label{eq:witt}
\textstyle{\dim(\bL_k)=\sum_{1\leq i\leq n} M_k(\lambda_i).}
\end{equation}
\end{thm}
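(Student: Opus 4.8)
The plan is to compare two power series identities: the Euler-Poincaré type formula $\chi_{\bA}(y)\cdot h_{\bA}(y)=1$ from Proposition~\ref{prop:char}, applied to $\bA=\caU(\bL)$, and the Poincaré--Birkhoff--Witt theorem, which expresses $h_{\caU(\bL)}(y)$ as an infinite product over the homogeneous dimensions of $\bL$. First I would recall the PBW identity in the graded setting: since $\bL=\coprod_{k\geq 1}\bL_k$ with $\dim(\bL_k)=\ell_k$, one has
\begin{equation}
\label{eq:pbwprod}
h_{\caU(\bL)}(y)=\prod_{k\geq 1}(1-y^k)^{-\ell_k}
\end{equation}
as an identity in $\Z\dbl y\dbr$ (convergence is formal, since each coefficient is a finite sum). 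Taking $-\log$ of both sides and using $-\log(1-u)=\sum_{j\geq 1}u^j/j$, one gets $\log h_{\caU(\bL)}(y)=\sum_{k\geq 1}\ell_k\sum_{j\geq 1}y^{kj}/j=\sum_{m\geq 1}b_m y^m$ with $b_m=\sum_{d\mid m}\ell_{m/d}/d$, exactly as in the proof of Lemma~\ref{lem:brez}.

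Next I would bring in the characteristic polynomial. By Proposition~\ref{prop:char} and the factorization \eqref{eq:charp1}, for $\bA=\caU(\bL)$ one has $h_{\bA}(y)=\chi_{\bA}(y)^{-1}=\prod_{i=1}^{n}(1-\lambda_i y)^{-1}$. Taking $-\log$ again gives $\log h_{\bA}(y)=\sum_{i=1}^{n}\sum_{j\geq 1}(\lambda_i y)^j/j=\sum_{m\geq 1}\big(\tfrac1m\sum_{i=1}^{n}\lambda_i^m\big)y^m$. Comparing coefficients of $y^m$ with the PBW computation yields the power-sum identity
\begin{equation}
\label{eq:powersumid}
\sum_{d\mid m}\frac{\ell_{m/d}}{d}=\frac1m\sum_{i=1}^{n}\lambda_i^m\qquad\text{for all }m\geq 1.
\end{equation}

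The final step is Möbius inversion. Rewrite \eqref{eq:powersumid} as $\sum_{d\mid m}(m/d)\,\ell_{m/d}=\sum_{i=1}^{n}\lambda_i^m$, i.e. $\sum_{j\mid m}j\,\ell_j=\sum_{i=1}^n\lambda_i^m$; applying the Möbius inversion formula to the arithmetic function $m\mapsto\sum_{i=1}^n\lambda_i^m$ gives $k\,\ell_k=\sum_{j\mid k}\mu(k/j)\sum_{i=1}^n\lambda_i^j=\sum_{i=1}^n\sum_{j\mid k}\mu(k/j)\lambda_i^j$, hence $\ell_k=\sum_{i=1}^n\tfrac1k\sum_{j\mid k}\mu(k/j)\lambda_i^j=\sum_{i=1}^n M_k(\lambda_i)$, which is precisely \eqref{eq:witt}. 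The only mildly delicate point is making sure all the logarithmic manipulations are legitimate as identities of formal power series (the constant terms are $h_{\caU(\bL)}(0)=1$ and $\chi_{\bA}(0)=1$, so $\log$ is defined, and each coefficient equation \eqref{eq:powersumid} is an identity in $\Q$ involving only finitely many terms), so I do not expect a genuine obstacle — the proof is essentially a bookkeeping exercise combining PBW with Proposition~\ref{prop:char}.
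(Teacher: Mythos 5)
Your proposal is correct and follows essentially the same route as the paper: PBW for $h_{\caU(\bL)}(y)$ combined with Proposition~\ref{prop:char} and the factorization of $\chi_{\bL}(y)$, then $-\log$ of both sides, comparison of coefficients to get $\sum_{j\mid m}j\,\ell_j=\sum_i\lambda_i^m$, and M\"obius inversion. The only cosmetic difference is that the paper equates the two product expressions first and takes a single logarithm, whereas you take logarithms of each side separately; the content is identical.
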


\begin{proof}
Let $\ell_k=\dim(\bL_k)$. By the Poincar\'e-Birkhoff-Witt theorem, \eqref{eq:chareq} and the definition
of the eigenvalues of $\bL$, one has
\begin{equation}
\label{eq:hallpf1}
h_{\caU(\bL_\bullet)}(y)=\textstyle{\prod_{k\geq 1} (1-y^k)^{-\ell_k}}=\prod_{1\leq i\leq n} (1-\lambda_i y)^{-1}.
\end{equation}
Applying $-\log(\argu)$ on both sides and using the identity
$-\log(1-u)=\sum_{j\geq 1} \frac{u^j}{j}$ for $u\in y\C\dbl y\dbr$ one obtains
\begin{equation}
\label{eq:hallpf2}
\sum_{k\geq 1} \ell_k\sum_{j\geq 1} \frac{y^{kj}}{j}=
\sum_{m\geq 1} \frac{y^m}{m}\sum_{1\leq i\leq n} \lambda_i^m=
\sum_{m\geq 1} \frac{\bp_m(\lambda_1,\ldots,\lambda_n)}{m}\cdot y^m,
\end{equation}
where $\bp_m(\lambda_1,\ldots,\lambda_n)=\sum_{1\leq i\leq n}\lambda_i^m$.
Comparing coefficients of $y^m$ on both sides yields
\begin{equation}
\label{eq:hallpf3}
\sum_{k|m} k\cdot \ell_k = \bp_m(\lambda_1,\ldots,\lambda_n).
\end{equation}
Hence, by the M\"obius inversion formula, one obtains 
\begin{equation}
\label{eq:hallpf4}
\ell_k=\frac{1}{k}\sum_{j|k}\mu(k/j)\cdot \bp_{j}(\lambda_1,\ldots,\lambda_n)=\sum_{1\leq i\leq n} M_k(\lambda_i).
\end{equation}
This yields the claim.
\end{proof}

\begin{rem}
\label{rem:witt}
(a) For the Lie algebra $\bL=L\langle \euX\rangle$, $|\euX|=r$, generated freely by $r$ elements, one has
$\chi_{\bL}(y)=1-r\cdot y$, i.e., $\deg(\bL)=1$ and $r$ is the eigenvalue.
Hence \eqref{eq:hallpf4} coincides with Witt's formula in this case.
There exists a generalization of Witt's formula in the case when $\euX$ is a graded set
(cf. \cite{kika:witt}).

\noindent
(b) Let $\bL=L\langle \euX\rangle\oplus L\langle \euY\rangle$, $|\euX|=r$, $|\euY|=s$. 
Then $\chi_{\bL}(y)=(1-r\cdot y)(1-s\cdot y)$, i.e., $\deg(\bL)=2$ and $r$ and $s$ are the eigenvalues.
For this Lie algebra there exists also an explicit formula for the multi-graded homogeneous components
(cf. \cite{dok:witt}).

\noindent
(c) Let $\bL$ be a finite-dimensional $\N$-graded Lie algebra.
Put $m_k=\textstyle{\sum_{j\in\N}\dim(\bL_{kj})}$ for $k\geq 1$. 
By Proposition~\ref{prop:char} and \eqref{eq:hallpf1}, one has
\begin{equation}
\label{eq:mult}
\chi_{\bL}(y)=\textstyle{\prod_{k\geq 1} \Phi_k(y)^{m_k},}
\end{equation}
where $\Phi_k(y)$ denotes the $k^{th}$-cyclotomic polynomial of degree $\varphi(k)$,
and $\varphi\colon\N\to\Z$ denotes Euler's $\varphi$-function, i.e., all eigenvalues of
$\bL$ are roots of unity.
From \eqref{eq:hallpf1} 
one concludes that
\begin{equation}
\label{eq:degLiefin}
\textstyle{\deg(\bL)=\sum_{k\geq 1} k\cdot\dim(\bL_k).}
\end{equation}

\noindent
(d) Let $\bL$ be a filiform Lie algebra. Then
\begin{equation}
\label{eq:fili1}
\chi_{\bL}(y)=h_{\caU(\bL)}(y)^{-1}=(1-y)\cdot \phi(y),
\end{equation}
where $\phi(y)$ is Euler's function. This Lie algebra is an example where $\boh(\bL)=1$ holds,
but the formal power series $\chi_{\bL}(y)$ cannot be continued meromorphically to the whole complex plane.
\end{rem}


\subsection{Necklace polynomials at roots of unity}
\label{ss:neckroot}
For a positive integer $m$ let $\Xi_m\subseteq\C^\ast$
denote the set of primitive $m^{th}$-roots of unity in the field of complex numbers.
The aim of this subsection is to compute the values of the functions
$P_k\colon \N\to\C$, $C_k\colon \N\to\C$, $k\geq 1$, where
\begin{equation}
\label{eq:defC}
\begin{aligned}
P_k(m)&=\textstyle{\sum_{\xi\in\Xi_m} \xi^k,}\\
C_k(m)&=\textstyle{\sum_{\xi\in\Xi_m} M_k(\xi)},
\end{aligned}
\end{equation}
for $m\in\N$. For a positive integer $k\in\N$ we 
define a function $\delta_k\colon\N\to\C$ by
\begin{equation}
\label{eq:delta}
\delta_k(m)=
\begin{cases}
m&\ \text{if $m\vert k$,}\\
0&\ \text{if $m\!\!\not\vert k$.}
\end{cases}
\end{equation}
Obviously, $\delta_k(1)=1$ for all $k\geq 1$.
If $m_1$ and $m_2$ are positive coprime integers, then $m_1m_2$ divides $k$ if, and only if,
$m_1$ divides $k$ and $m_2$ divides $k$. Hence $\delta_k$ is a multiplicative arithmetic function.
Moreover, $\delta_1$ coincides with the unit under convolution ``$\ast$''.
The following proposition gives a complete description of the function
$P_k$, $k\geq 1$.

\begin{prop}
\label{prop:Cs}
Let $k$ be a positive integer.
\begin{itemize}
\item[(a)] $P_k$ is a multiplicative arithmetic function.
\item[(b)] $P_1=C_1=\mu$, where $\mu\colon \N\to\Z$ is the M\"obius function.
\item[(c)] For $k\geq 1$ one has $P_k=\delta_k\ast\mu$.
\item[(d)] Let $m\geq 1$ with $\gcd(m,k)=1$. Then $P_k(m)=\mu(m)$.
\item[(e)] Let $m\geq 1$ with $\gcd(m,k)=1$ and assume that $k>1$. Then $C_k(m)=0$.
\end{itemize}
\end{prop}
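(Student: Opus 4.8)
The backbone of the whole proposition is part~(c), and the plan is to prove it first, since (a), (b), (d) and (e) then follow by routine bookkeeping with arithmetic functions. The one genuine ingredient is the orthogonality relation for roots of unity: for a fixed $k\geq 1$ and any $m\geq 1$ one has $\sum_{\zeta^m=1}\zeta^k=m$ when $m\mid k$ and $\sum_{\zeta^m=1}\zeta^k=0$ otherwise, i.e.\ $\sum_{\zeta^m=1}\zeta^k=\delta_k(m)$. Since every $m^{\mathrm{th}}$ root of unity is a primitive $d^{\mathrm{th}}$ root of unity for exactly one divisor $d$ of $m$, the set $\{\zeta\in\C^\ast\mid\zeta^m=1\}$ is the disjoint union of the $\Xi_d$ with $d\mid m$, hence $\delta_k(m)=\sum_{d\mid m}\sum_{\xi\in\Xi_d}\xi^k=\sum_{d\mid m}P_k(d)$. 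M\"obius inversion then gives $P_k=\mu\ast\delta_k=\delta_k\ast\mu$, which is~(c).

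Granting (c), part~(a) is immediate, since $\mu$ and $\delta_k$ are multiplicative arithmetic functions (the latter was noted just before the proposition) and the Dirichlet convolution of two multiplicative functions is again multiplicative; if a self-contained argument is wanted, the bijection $\Xi_{m_1}\times\Xi_{m_2}\to\Xi_{m_1m_2}$, $(\xi_1,\xi_2)\mapsto\xi_1\xi_2$, valid whenever $\gcd(m_1,m_2)=1$, gives $P_k(m_1m_2)=P_k(m_1)P_k(m_2)$ directly. For~(b) I would put $k=1$ in (c) and use that $\delta_1$ is the unit for $\ast$, so $P_1=\mu$; and since $M_1(y)=y$ one gets $C_1(m)=\sum_{\xi\in\Xi_m}\xi=P_1(m)=\mu(m)$, so $C_1=\mu$ as well.

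For~(d), expand $P_k(m)=\sum_{d\mid m}\delta_k(d)\,\mu(m/d)$ via (c): if a divisor $d$ of $m$ contributes a nonzero term then $d\mid k$, so $d\mid\gcd(m,k)=1$ and $d=1$; thus only the $d=1$ summand survives and $P_k(m)=\delta_k(1)\,\mu(m)=\mu(m)$. Finally, for~(e), interchanging the two finite sums in $C_k(m)=\frac{1}{k}\sum_{\xi\in\Xi_m}\sum_{j\mid k}\mu(k/j)\,\xi^j$ yields $C_k(m)=\frac{1}{k}\sum_{j\mid k}\mu(k/j)\,P_j(m)$; for each $j\mid k$ one has $\gcd(m,j)\mid\gcd(m,k)=1$, so (d) applies and $P_j(m)=\mu(m)$, giving $C_k(m)=\frac{\mu(m)}{k}\sum_{j\mid k}\mu(k/j)=\frac{\mu(m)}{k}\sum_{d\mid k}\mu(d)=0$ because $\sum_{d\mid k}\mu(d)$ vanishes for $k>1$.

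I do not expect a serious obstacle: once (c) is in place the proposition reduces entirely to standard facts (orthogonality of roots of unity, M\"obius inversion, that $\delta_1$ is the convolution unit, and $\sum_{d\mid k}\mu(d)=0$ for $k>1$). The only points needing a little care are the divisibility bookkeeping in (d) and (e) --- that a common divisor of $m$ and $k$, respectively of $m$ and $j$, must be $1$ --- and getting the normalisation in the orthogonality relation right (value $m$, not $1$, when $m\mid k$), which is precisely what makes the surviving $d=1$ term in (d) equal $\delta_k(1)=1$.
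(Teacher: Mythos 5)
Your proof is correct, and it takes a genuinely different (and shorter) route to the heart of the proposition than the paper does. The paper proceeds in the order (a), (b), (c): it first establishes multiplicativity of $P_k$ via $\Xi_{m_1m_2}=\Xi_{m_1}\Xi_{m_2}$, then computes $P_1$ on prime powers from the explicit form of the cyclotomic polynomials $\Phi_p$ and $\Phi_{p^\alpha}$, and finally verifies the identity $P_k=\delta_k\ast\mu$ by checking both sides on prime powers $p^\alpha$ through a three-case analysis according to how $p^\alpha$ interacts with $k$ (coprime; $\alpha\leq\gamma$; $\gamma<\alpha$, where $p^\gamma\,\Vert\,k$). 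You instead prove (c) first, globally and without any case distinction, from the orthogonality relation $\sum_{\zeta^m=1}\zeta^k=\delta_k(m)$ together with the partition $\{\zeta\mid\zeta^m=1\}=\bigsqcup_{d\mid m}\Xi_d$, which gives $\delta_k=P_k\ast 1$ and hence $P_k=\delta_k\ast\mu$ by M\"obius inversion; then (a) and (b) fall out as corollaries (convolution of multiplicative functions is multiplicative; $\delta_1$ is the convolution unit). Your argument buys brevity and avoids the prime-power bookkeeping entirely, at the cost of invoking M\"obius inversion one extra time; the paper's version is more computational but makes the values $P_k(p^\alpha)$ explicit along the way. Parts (d) and (e) of your write-up coincide with the paper's, modulo the cosmetic reindexing $j\mapsto k/j$ in the sum $\sum_{j\mid k}\mu(k/j)P_j(m)$ versus $\sum_{j\mid k}\mu(j)P_{k/j}(m)$. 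All the supporting facts you use are standard and correctly applied, including the normalisation $\delta_k(1)=1$ and the vanishing of $\sum_{d\mid k}\mu(d)$ for $k>1$.
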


\begin{proof}
(a) By definition, $P_k(1)=1$ for all $k\geq 1$. 
If $m_1$ and $m_2$ are positive coprime integers, one has $\Xi_{m_1m_2}=\Xi_{m_1}\Xi_{m_2}$.
Thus $P_k(m_1m_2)=P_k(m_1)\cdot P_k(m_2)$ showing that $P_k$ is multiplicative.

\noindent
(b) For every prime number $p$ one has 
$\Phi_p(y)=\sum_{0\leq j\leq p-1} y^j$, where $\Phi_p(y)\in\Z[y]$ denotes the $p^{th}$-cyclotomic polynomial. 
Hence
$P_1(p)=-1$. 
Moreover, for $\alpha\geq 2$ and $p$ prime, the 
$(p^\alpha)^{th}$-cyclotomic
polynomial $\Phi_{p^\alpha}(y)\in\Z[y]$ is given by
\begin{equation}
\label{eq:cycl}
\Phi_{p^\alpha}(y)=\frac{y^{p^\alpha}-1}{y^{p^{\alpha-1}}-1}=y^{(p-1)p^{\alpha-1}}+
y^{(p-2)p^{\alpha-1}}+\cdots +y^{p^{\alpha-1}}+1.
\end{equation}
Hence $P_1(p^\alpha)=\sum_{\xi\in\Xi_{p^\alpha}}\xi=0$, and $P_1$ coincides with $\mu$ on all prime powers.

\noindent
(c) Let $k\geq 2$, and let $p^\alpha$ be a prime power.
We proceed by a case-by-case analysis.

\noindent
{\bf Case 1:} $\gcd(k,p^\alpha)=1$. In this case $\argu^k\colon\Xi_{p^\alpha}\to\Xi_{p^\alpha}$ is a bijection.
Hence $P_k(p^\alpha)=P_1(p^\alpha)=\mu(p^\alpha)$.
On the other hand, for $\gcd(k,p^\alpha)=1$ one has
\begin{equation}
\label{eq:neck3}
(\delta_k\ast\mu)(p^\alpha)=\sum_{0\leq j\leq \alpha} \delta_k(p^j)\mu(p^{\alpha-j})=\mu(p^\alpha). 
\end{equation}
\noindent
{\bf Case 2:} $k=p^\gamma\cdot \beta$, $\gcd(\beta,p)=1$, $\alpha\leq \gamma$.
In this case one has $\xi^k=1$ for all $\xi\in\Xi_{p^\alpha}$, and thus $P_k(p^\alpha)=\varphi(p^\alpha)$.
On the other hand, 
\begin{equation}
\label{eq:neck4}
(\delta_k\ast\mu)(p^\alpha)=\sum_{0\leq j\leq \alpha} \delta_k(p^j)\mu(p^{\alpha-j})=
\sum_{0\leq j\leq \alpha} p^j\mu(p^{\alpha-j})=
\varphi(p^\alpha). 
\end{equation}
\noindent
{\bf Case 3:} $k=p^\gamma\cdot \beta$, $\gcd(\beta,p)=1$, $\gamma< \alpha$.
As in Case 1, $\argu^\beta\colon\Xi_{p^\alpha}\to\Xi_{p^\alpha}$ is a bi\-jection, and 
$\argu^{p^\gamma}\colon \Xi_{p^\alpha}\to\Xi_{p^{\alpha-\gamma}}$ is surjective with
all fibers of cardinality $p^\gamma$. Hence $P_k(p^\alpha)=p^\gamma\mu(p^{\alpha-\gamma})$, i.e.,
$P_k(p^{\gamma+1})=-p^\gamma$, and $P_k(p^\alpha)=0$ for $\alpha>\gamma+1$.
On the other hand, for $0\leq j\leq \alpha$, one has $\mu(p^{\alpha-j})=0$ unless $j=\alpha$ or $j=\alpha-1$.
Hence
\begin{equation}
\label{eq:neck5}
(\delta_k\ast\mu)(p^\alpha)=\sum_{0\leq j\leq \alpha} \delta_k(p^j)\mu(p^{\alpha-j})=
-\delta_k(p^{\alpha-1})+\delta_k(p^\alpha). 
\end{equation}
By hypothesis, $\delta_k(p^\alpha)=0$. Moreover, $\delta_k(p^{\alpha-1})\not=0$ if, and only if, $\gamma=\alpha-1$.
This yields the claim.

\noindent
(d) By hypothesis,
$P_k(m)=\sum_{d|m}\delta_k(d)\mu(m/d)=\mu(m)$.

\noindent
(e) By hypothesis and (d),
\begin{equation}
\label{eq:neck6}
C_k(m)=\frac{1}{k}\sum_{j|k} \mu(j)\cdot P_{k/j}(m)\\
=\frac{1}{k}\mu(m)\sum_{j|k} \mu(j) =\frac{1}{k}\mu(m)\cdot\delta_1(k)=0,
\end{equation}
and hence the claim.
\end{proof}


\subsection{Graded Lie algebras with $\boh(\caU(\bL))=1$}
\label{ss:grhU1}
The main purpose of this subsection is to prove the following theorem
and to discuss its consequences.

\begin{thm}
\label{thm:grhU1}
Let $\bL$ be an $\N$-graded Lie algebra of finite type such that
\begin{itemize}
\item[(i)] $\bL$ is generated in degree $1$;
\item[(ii)] $\bL$ is $\chi$-finite;
\item[(iii)] $\boh(\caU(\bL))=1$.
\end{itemize}
Then $\bL$ is finite-dimensional and $\boh(\bL)=0$.
\end{thm}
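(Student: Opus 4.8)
The plan is to extract $\dim(\bL_k)$ from the generalized Witt formula (Theorem~\ref{thm:witt}), to identify the eigenvalues of $\bL$ via Proposition~\ref{prop:lmax1}, and then to force $\dim(\bL_p)=0$ for a large prime $p$ by means of the necklace computation in Proposition~\ref{prop:Cs}(e).

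First I would discard the trivial case $\bL=0$ and assume $\bL\not=0$, so that $\caU(\bL)\not=\F$ and $n=\deg(\bL)\geq 1$. By hypotheses (ii) and (iii) the algebra $\caU(\bL)$ is $\chi$-finite with $\boh(\caU(\bL))=1$, so Proposition~\ref{prop:lmax1} applies and all eigenvalues $\lambda_1,\dots,\lambda_n$ of $\bL$ are roots of unity. Since $\chi_{\bL}(y)=\chi_{\caU(\bL)}(y)$ is a polynomial with integer coefficients (by $\chi$-finiteness together with \eqref{eq:powerser}), the multiset $\{\lambda_1,\dots,\lambda_n\}$ is invariant under the Galois group of the splitting field of $\chi_{\bL}(y)$ over $\Q$; as this group permutes the set $\Xi_m$ of primitive $m$-th roots of unity transitively, the eigenvalue multiset is a disjoint union of complete sets $\Xi_m$, with each $\xi\in\Xi_m$ occurring with some multiplicity $a_m\geq 0$ (where $a_m=0$ for all but finitely many $m$, and $a_m\not=0$ for at least one $m$). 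Equivalently, up to sign $\chi_{\bL}(y)$ is a product of reciprocal cyclotomic polynomials.

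Substituting this description into Theorem~\ref{thm:witt} and recalling the definition of $C_k(m)$ from \eqref{eq:defC}, I would obtain
\[
\dim(\bL_k)=\sum_{1\leq i\leq n}M_k(\lambda_i)=\sum_{m\geq 1}a_m\sum_{\xi\in\Xi_m}M_k(\xi)=\sum_{m\geq 1}a_m\,C_k(m)\qquad(k\geq 1).
\]
Then I would put $M=\max\{\,m\mid a_m\not=0\,\}$ and pick a prime $p>M$. Now $p>1$ and $\gcd(m,p)=1$ for every $m$ with $a_m\not=0$, so Proposition~\ref{prop:Cs}(e) gives $C_p(m)=0$ for all such $m$, whence $\dim(\bL_p)=0$. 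Because $\bL$ is generated in degree $1$ and of finite type, the equality $\bL_p=0$ with $p>1$ forces $\bL$ to be finite-dimensional (cf.\ the observation after \eqref{eq:gen1lie}), and then $\dim(\bL_k)=0$ for all $k\geq p$ yields $\boh(\bL)=0$.

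The substantive content is already packaged in Propositions~\ref{prop:lmax1} and~\ref{prop:Cs}, so the only genuinely delicate points are justifying that the rationality of $\chi_{\bL}$ upgrades ``each eigenvalue is a root of unity'' to ``the eigenvalue multiset is a union of complete sets $\Xi_m$'', and choosing the prime $p$ larger than every order $m$ that actually occurs, which is exactly what makes all the necklace sums $C_p(m)$ vanish simultaneously. Everything else is bookkeeping.
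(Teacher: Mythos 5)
Your proposal is correct and follows essentially the same route as the paper: Proposition~\ref{prop:lmax1} forces the eigenvalues to be roots of unity, Galois invariance of $\chi_{\bL}$ organizes them into complete sets $\Xi_m$ with uniform multiplicities, and Theorem~\ref{thm:witt} together with Proposition~\ref{prop:Cs}(e) kills $\dim(\bL_k)$ for a suitable $k>1$ coprime to all occurring orders. The only (immaterial) difference is your choice of $k$ as a prime $p>\max\{m\mid a_m\not=0\}$ where the paper takes $k=1+\prod_{m\in\caM(\bL)}m$.
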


\begin{proof} By hypothesis (ii), $\caU(\bL)$ is $\chi$-finite. 
Let $n=\deg(\bL)$, and let $\lambda_1,\ldots,\lambda_n$ denote the eigenvalues of $\bL$.
Then, by hypothesis (iii) and Proposition~\ref{prop:lmax1}, $\lambda_i$ is a root of unity.
Let $m_i=\ord(\lambda_i)$ denote its order in the multiplicative group $\C^\times$,  let
$\caM(\bL)=\{\,m_i\mid 1\leq i\leq n\,\}$,
and let $n_i=|\{\,1\leq j\leq n\mid \lambda_j=\lambda_i\,\}|$ denote their multiplicities.
Let $\Xi_m\subseteq\C^\times$ denote the set of primitive $m^{th}$-roots of unity of $\C$.
Since the absolute Galois group $G_{\Q}$ of $\Q$ acts on the set $\Lambda=\{\,\lambda_i\mid 1\leq i\leq n\,\}$,
one has $\Xi_{m_i}\subseteq\Lambda$ for all $i\in\{1,\ldots,n\}$, and $n_i=n_j$ if $m_i=m_j$.
For $m=m_i\in \caM(\bL)$ let $n(m)=n_i$.
By Theorem~\ref{thm:witt}, one has for all $k\geq 1$ that
\begin{equation}
\label{eq:grhU1}
\dim(\bL_k)=\sum_{1\leq i\leq n} M_k(\lambda_i)=
\sum_{m\in \caM(\bL)} n(m)\cdot \sum_{\xi\in\Xi_m} M_k(\xi)
= \sum_{m\in \caM(\bL)} n(m)\cdot C_k(m)
\end{equation}
(cf. \eqref{eq:defC}).
For $k=1+\prod_{m\in\caM(\bL)}m$, one has that $\gcd(k,m)=1$ for all $m\in\caM(\bL)$.
By Proposition~\ref{prop:Cs}(e), $C_k(m)=0$ for all $m\in\caM(\bL)$, and thus
$\dim(\bL_k)=0$. As $\bL$ is $1$-generated, this shows that $\bL$ is finite-dimensional (cf. \eqref{eq:gen1lie}).
\end{proof}

From A.E.~Berezny\u\i's lemma (cf. Lemma~\ref{lem:brez}) one concludes the following:

\begin{cor}
\label{cor:brez2}
Let $\bL$ be a finitely generated $\N$-graded Lie algebra
generated in degree $1$ satisfying $\boh(\bL)=1$.
Then $\bL$ is not of type FP.
\end{cor}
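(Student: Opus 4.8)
The plan is to argue by contradiction: assume $\bL$ \emph{is} of type FP, and show that the hypotheses of Theorem~\ref{thm:grhU1} are all met, so that $\bL$ must be finite-dimensional with $\boh(\bL)=0$ — which is incompatible with the assumption $\boh(\bL)=1$.

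First I would unwind the standing hypotheses. Since $\bL$ is finitely generated and generated in degree $1$, Fact~\ref{fact:onegen}(b) shows that $\bL$ is of finite type, so that $\caU(\bL)$ is an $\N_0$-graded connected algebra of finite type and the machinery of \S\ref{ss:chif} applies. If moreover $\bL$ is of type FP, then Fact~\ref{fact:propco}(c) says $\tchi_{\caU(\bL)}(x,y)$ is a polynomial, hence so is $\chi_{\caU(\bL)}(y)=\tchi_{\caU(\bL)}(1,y)$; thus $\caU(\bL)$, and therefore $\bL$, is $\chi$-finite. This is hypothesis (ii) of Theorem~\ref{thm:grhU1}, and hypothesis (i) holds by assumption.

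Next I would identify $\boh(\caU(\bL))$ with $\boh(\bL)$. Since $\boh(\bL)=1\neq 0$, Corollary~\ref{cor:brez1}(a) excludes the case $\dim(\bL)<\infty$, so $\dim(\bL)=\infty$; note that here the fact that a $1$-generated Lie algebra of finite type with $\bL_k=0$ for some $k>1$ is automatically finite-dimensional (cf.~\eqref{eq:gen1lie}) guarantees $\dim(\bL_k)\geq 1$ for all $k$, so that Berezny\u\i's Lemma~\ref{lem:brez} is applicable. Then Corollary~\ref{cor:brez1}(b) yields $\boh(\caU(\bL))=\boh(\bL)=1$, which is hypothesis (iii). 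Now Theorem~\ref{thm:grhU1} applies to $\bL$ and forces $\bL$ to be finite-dimensional with $\boh(\bL)=0$, contradicting $\boh(\bL)=1$; hence $\bL$ cannot be of type FP.

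I do not expect any genuine obstacle here: the statement is essentially the combination of Theorem~\ref{thm:grhU1} with Berezny\u\i's lemma in the form of Corollary~\ref{cor:brez1}. The only point requiring a little care is the bookkeeping of the finiteness conditions — translating "type FP" into "$\chi$-finite" via Fact~\ref{fact:propco}(c) (which needs the finite-type observation from Fact~\ref{fact:onegen}(b)) and legitimately invoking Berezny\u\i's lemma to transport the entropy from $\bL$ to $\caU(\bL)$. Once these are in place, the contradiction is immediate.
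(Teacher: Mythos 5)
Your proof is correct and follows exactly the route the paper intends: the paper derives this corollary directly from Berezny\u\i's lemma (via Corollary~\ref{cor:brez1}) combined with Theorem~\ref{thm:grhU1}, which is precisely your contradiction argument. Your extra care in checking $\dim(\bL_k)\geq 1$ before invoking Lemma~\ref{lem:brez}, and in passing from type FP to $\chi$-finite via Fact~\ref{fact:propco}(c), fills in details the paper leaves implicit.
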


An alternative reformulation of Corollary~\ref{cor:brez2} is the following.

\begin{cor}
\label{cor:brez3}
Let $\bL$ be a finitely generated $\N$-graded Lie algebra
generated in degree $1$ of type FP.
Then either $\bL$ is finite-dimensional and $\boh(\bL)=0$, or $\boh(\bL)>1$.
\end{cor}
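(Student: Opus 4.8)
The plan is to reduce the statement to Theorem~\ref{thm:grhU1} (equivalently, to Corollary~\ref{cor:brez2}) by a dichotomy on $\dim(\bL)$. First I would record that, since $\bL$ is of type FP, Fact~\ref{fact:propco}(c) implies that $\caU(\bL)$ is $\chi$-finite; hence all the results of the preceding subsections apply to $\bL$. If $\bL=0$ there is nothing to prove (then $\dim(\bL_k)=0$ for all $k$, so $\boh(\bL)=0$), so from now on assume $\bL\neq 0$.

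In the first case, $\dim(\bL)<\infty$, I would simply quote Corollary~\ref{cor:brez1}(a), which gives $\boh(\bL)=0$ at once; this is the first alternative in the statement.

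In the second case, $\dim(\bL)=\infty$, Corollary~\ref{cor:brez1}(b) already yields $\boh(\bL)=\boh(\caU(\bL))\geq 1$, so the only thing left is to exclude the borderline value $\boh(\bL)=1$. Suppose towards a contradiction that $\boh(\bL)=1$; then $\boh(\caU(\bL))=1$, and all three hypotheses of Theorem~\ref{thm:grhU1} hold: $\bL$ is generated in degree $1$ by assumption, $\bL$ is $\chi$-finite by the observation above, and $\boh(\caU(\bL))=1$. Theorem~\ref{thm:grhU1} then forces $\bL$ to be finite-dimensional, contradicting $\dim(\bL)=\infty$. (Alternatively, one may cite Corollary~\ref{cor:brez2} verbatim: a finitely generated $\N$-graded Lie algebra generated in degree $1$ with entropy $1$ cannot be of type FP.) Hence $\boh(\bL)\neq 1$, and combined with $\boh(\bL)\geq 1$ this gives $\boh(\bL)>1$, the second alternative.

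I do not expect any genuine obstacle here: the corollary is a purely formal consequence of Theorem~\ref{thm:grhU1} together with Berezny\u\i's lemma (Lemma~\ref{lem:brez}). The only points requiring attention are bookkeeping ones — checking that ``type FP'' supplies the ``$\chi$-finite'' hypothesis needed to invoke Theorem~\ref{thm:grhU1}, and that the lower bound $\boh(\bL)\geq 1$ in the infinite-dimensional case is exactly Corollary~\ref{cor:brez1}(b), so that ruling out $\boh(\bL)=1$ genuinely upgrades to the strict inequality $\boh(\bL)>1$.
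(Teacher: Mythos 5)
Your proposal is correct and matches the paper's intent: the paper states Corollary~\ref{cor:brez3} as a direct reformulation of Corollary~\ref{cor:brez2}, which in turn is exactly your argument (type FP gives $\chi$-finiteness via Fact~\ref{fact:propco}(c), Theorem~\ref{thm:grhU1} excludes $\boh(\bL)=1$, and Corollary~\ref{cor:brez1} handles the dichotomy on $\dim(\bL)$). No issues.
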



\section{Koszul Lie algebras}
\label{s:koslie}
Let $\bL$ be a Koszul Lie algebra. Then, by definition,
$\caU(\bL)$ is $\N_0$-graded, quadratic and of finite type.
Moreover, for the cohomology algebra one has an isomorphism
\begin{equation}
\label{eq:coholie}
H^\bullet(\bL,\F)=\diag(\Ext^{\bullet,\bullet}_{\caU(\bL)}(\F,\F))\simeq\caU(\bL)^!.
\end{equation}
Let $\bL^{\ab}=\bL/\coprod_{k\geq 2} \bL_k$ denote the maximal abelian quotient of $\bL$.
Since $H^\bullet(\bL,\F)$ is a quadratic algebra, inflation 
$\iota^\bullet\colon H^\bullet(\bL^{\ab},\F)\to H^\bullet(\bL,\F)$ is a surjective homomorphism of algebras.
As $H^\bullet(\bL^{\ab},\F)$ is isomorphic to the exterior algebra $\Lambda(\bL_1^\ast)$,
one concludes the following fact 
(cf. \cite[\S7.1, Conj.~2]{popo:quad}).

\begin{fact}
\label{fact:koslie}
Let $\bL$ be a Koszul Lie algebra. Then $\ccd(\bL)\leq \dim(\bL_1)$ and equality holds if, and only if,
$\bL$ is abelian.
\end{fact}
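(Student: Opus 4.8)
The plan is to build everything on the surjective inflation homomorphism $\iota^\bullet\colon H^\bullet(\bL^{\ab},\F)\to H^\bullet(\bL,\F)$ recorded above, together with the observation that Koszulity turns $\ccd(\bL)$ into the top nonvanishing degree of the graded cohomology ring, and that $H^\bullet(\bL^{\ab},\F)\cong\Lambda(\bL_1^\ast)$ is an exterior algebra.

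First I would establish the bound. Since $\caU(\bL)$ is Koszul, $\Ext^{s,t}_{\caU(\bL)}(\F,\F)=0$ for $s\neq t$ and $H^s(\bL,\F)=\Ext^{s,s}_{\caU(\bL)}(\F,\F)$; hence by Fact~\ref{fact:propco}(a) the number $\ccd(\bL)=\ccd(\caU(\bL))$ is the largest $s$ with $H^s(\bL,\F)\neq 0$. As $\caU(\bL)$ is quadratic, $\bL$ is generated in degree $1$, so $\bL^{\ab}=\bL/[\bL,\bL]$ is concentrated in degree $1$ and $d:=\dim(\bL_1)=\dim(\bL^{\ab})<\infty$ (recall $\caU(\bL)$ is of finite type). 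Since $H^\bullet(\bL^{\ab},\F)\cong\Lambda(\bL_1^\ast)$ vanishes in degrees $>d$ and $\iota^\bullet$ is onto, $H^s(\bL,\F)=0$ for all $s>d$, so $\ccd(\bL)\leq d<\infty$. If $\bL$ is abelian then $H^\bullet(\bL,\F)\cong\Lambda(\bL_1^\ast)$ has nonzero top component $\Lambda^d(\bL_1^\ast)$, so $\ccd(\bL)=d$; this settles one implication of the equality clause.

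The content is the converse. Assume $\ccd(\bL)=d$, i.e. $H^d(\bL,\F)\neq 0$, and put $I=\ker(\iota^\bullet)$, a graded two-sided ideal of $\Lambda:=\Lambda(\bL_1^\ast)$ with $I_0=I_1=0$. The degree-$d$ part of $\iota^\bullet$ sends the one-dimensional space $\Lambda^d$ onto $H^d(\bL,\F)\neq 0$, hence is an isomorphism, so $I\cap\Lambda^d=0$. Now I would invoke the multiplicative structure of the exterior algebra: the wedge pairing $\Lambda^k\times\Lambda^{d-k}\to\Lambda^d$ is non-degenerate, so a nonzero homogeneous element $\omega\in I_k$ would admit $\eta\in\Lambda^{d-k}$ with $0\neq\omega\wedge\eta\in I\cap\Lambda^d$, a contradiction; thus $I=0$ and $\iota^\bullet$ is an isomorphism of graded algebras $\Lambda(\bL_1^\ast)\cong H^\bullet(\bL,\F)=\caU(\bL)^!$. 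Applying the quadratic-dual functor, using $(\bA^!)^!\cong\bA$ and the identity $\Lambda(\bL_1^\ast)^!=S(\bL_1)$ for the symmetric algebra, one obtains an isomorphism $\caU(\bL)\cong S(\bL_1)$; in particular $\caU(\bL)$ is commutative, and since $\bL\hookrightarrow\caU(\bL)$ by Poincar\'e--Birkhoff--Witt this forces $[\bL,\bL]=0$, i.e. $\bL$ is abelian.

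The one place that needs care, and which I regard as the crux, is that Hilbert-series or Euler-characteristic bookkeeping alone is too weak here (already with $\dim(\bL_1)=3$ one can write down Hilbert series compatible with $\ccd=3$ that do not come from an abelian $\bL$); the argument genuinely uses the Poincar\'e-duality pairing on $H^\bullet(\bL^{\ab},\F)\cong\Lambda(\bL_1^\ast)$ to force $\iota^\bullet$ to be injective, after which Koszul duality transports the resulting algebra isomorphism down to $\caU(\bL)$ itself.
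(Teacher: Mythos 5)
Your argument is correct and follows essentially the same route as the paper: surjectivity of the inflation map $\iota^\bullet$ gives the bound $\ccd(\bL)\leq\dim(\bL_1)$, non-degeneracy of the wedge pairing $\Lambda^k\times\Lambda^{\ell-k}\to\Lambda^\ell$ upgrades injectivity of $\iota^\ell$ to injectivity of all of $\iota^\bullet$, and quadratic duality transports the resulting isomorphism down to $\caU(\bL)\simeq\caU(\bL^{\ab})$. The only addition is your explicit verification of the (easy) forward implication and the closing remark, neither of which changes the substance.
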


\begin{proof}
If $\ell=\dim(\bL_1)$, then $H^{\ell+1}(\bL^{\ab},\F)=0$. As $\iota^\bullet$ is surjective, this implies $H^{\ell+1}(\bL,\F)=0$.
Hence $\ccd(\bL)\leq\ell$ (cf. Fact~\ref{fact:propco}(a)). Assume that $\ccd(\bL)=\ell$. Then
$\kernel(\iota^\ell)=0$. For any non-trivial element $x\in\Lambda_k(\bL_1^\ast)$,
there exists $y\in\Lambda_{\ell-k}(\bL^\ast_1)$ such that $x\wedge y\not=0$.
Thus $\kernel(\iota^\ell)=0$ implies that $\iota^\bullet$ is injective, and hence an isomorphism.
Therefore, $(\iota^\bullet)^!\colon\caU(\bL)\to\caU(\bL^{\ab})$ is an isomorphism, and this yields the claim.
\end{proof}


\subsection{Koszul Lie algebras with entropy equal to 1}
\label{ss:koslient1}
From Theorem~\ref{thm:grhU1} one concludes the following result
which is again an open problem for Koszul algebras in general
(cf. \cite[\S7.1, Conj.~3]{popo:quad}).

\begin{prop}
\label{prop:koslieen1}
Let $\bL$ be a Koszul Lie algebra satisfying $\boh(\caU(\bL))=1$.
Then $\bL$ is abelian.
\end{prop}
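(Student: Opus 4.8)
The plan is to deduce Proposition~\ref{prop:koslieen1} directly from Theorem~\ref{thm:grhU1} by verifying that a Koszul Lie algebra $\bL$ with $\boh(\caU(\bL))=1$ satisfies all three hypotheses of that theorem, and then to upgrade the conclusion ``finite-dimensional'' to ``abelian'' using Fact~\ref{fact:koslie}. First I would check the hypotheses. Hypothesis (i): a Koszul algebra is quadratic, hence generated in degree $1$, so $\caU(\bL)$ is generated in degree $1$; by the discussion at the start of \S\ref{s:grlie} this means $\bL$ is generated in degree $1$. (One should note $\bL$ is of finite type since being Koszul includes finite type by definition in \S\ref{ss:koszul}.) Hypothesis (iii) is given. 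The only real point is hypothesis (ii), that $\bL$ is $\chi$-finite, i.e.\ that $\chi_{\caU(\bL)}(y)$ is a polynomial.

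For the $\chi$-finiteness, the key step is to establish that $\ccd(\bL)<\infty$, for then by \S\ref{ss:koszul} (``An $\F$-Koszul algebra is of finite cohomological dimension if, and only if, $\bA$ is $\chi$-finite'') we get $\chi$-finiteness of $\caU(\bL)$. To bound $\ccd(\bL)$ I would invoke Fact~\ref{fact:koslie}: for a Koszul Lie algebra one always has $\ccd(\bL)\leq\dim(\bL_1)$. Here $\dim(\bL_1)<\infty$ because $\bL$ is of finite type, so $\ccd(\bL)\leq\dim(\bL_1)<\infty$. Hence $\caU(\bL)$ is $\chi$-finite, and (ii) holds. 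Now Theorem~\ref{thm:grhU1} applies and yields that $\bL$ is finite-dimensional (and $\boh(\bL)=0$).

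It remains to pass from finite-dimensional to abelian. Here I would again use Fact~\ref{fact:koslie}, which gives equality $\ccd(\bL)=\dim(\bL_1)$ if and only if $\bL$ is abelian; so the task is to show $\ccd(\bL)=\dim(\bL_1)$ for our finite-dimensional Koszul $\bL$. For this, combine the Koszul identity $\chi_{\caU(\bL)}(y)=h_{\caU(\bL)^!}(-y)$ (from \eqref{eq:kosz1}) with the fact that for a finite-dimensional Koszul algebra $\deg(\caU(\bL))=\ccd(\bL)$, together with the formula $\deg(\bL)=\sum_{k\geq 1}k\cdot\dim(\bL_k)$ from Remark~\ref{rem:witt}(c). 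If $\bL$ were not abelian, there would be some $k\geq 2$ with $\bL_k\neq 0$, forcing $\deg(\bL)=\sum_{k\geq1}k\dim(\bL_k)\geq \dim(\bL_1)+2>\dim(\bL_1)$; but Koszulity also forces $\ccd(\bL)\leq\dim(\bL_1)$, contradicting $\ccd(\bL)=\deg(\bL)$. Hence $\bL_k=0$ for all $k\geq2$, i.e.\ $\bL$ is abelian.

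The main obstacle is making sure the two finiteness bounds are correctly aligned: on the one hand $\ccd(\bL)\leq\dim(\bL_1)$ (a free input from Fact~\ref{fact:koslie}) is what is needed \emph{before} applying Theorem~\ref{thm:grhU1}, to get $\chi$-finiteness; on the other hand, \emph{after} Theorem~\ref{thm:grhU1} gives finite-dimensionality, one needs the precise identity $\ccd(\bL)=\deg(\bL)$ for finite-dimensional Koszul algebras (the ``$\deg(\bA)=\ccd(\bA)$'' clause of \S\ref{ss:koszul}) plus the explicit degree formula of Remark~\ref{rem:witt}(c) to squeeze out abelianness. Everything else is bookkeeping; one should just double-check that the finite type hypothesis built into ``Koszul'' is enough to guarantee $\dim(\bL_1)<\infty$, which it is.
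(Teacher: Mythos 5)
Your proposal is correct. The first half — verifying the hypotheses of Theorem~\ref{thm:grhU1} by using Fact~\ref{fact:koslie} to get $\ccd(\bL)\leq\dim(\bL_1)<\infty$, hence $\chi$-finiteness, hence finite-dimensionality — is essentially the paper's argument (the paper phrases it via ``type FP'' and Fact~\ref{fact:propco}(c) rather than via the $\chi$-finiteness criterion of \S\ref{ss:koszul}, but the substance is identical). Where you genuinely diverge is the final step from finite-dimensional to abelian. The paper writes $\chi_{\bL}(y)=\prod_{k\geq 1}\Phi_k(y)^{m_k}$ with $m_k=\sum_{j\geq 1}\dim(\bL_{jk})$ and invokes Descartes' rule of signs (Fact~\ref{fact:desrule}) to rule out the eigenvalue $-1$, i.e.\ the factor $\Phi_2$, forcing $m_2=0$ and hence $\bL_2=0$. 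You instead run a degree count: $\deg(\bL)=\sum_{k\geq 1}k\cdot\dim(\bL_k)$ from Remark~\ref{rem:witt}(c), combined with $\ccd(\bL)=\deg(\bL)$ for Koszul algebras of finite cohomological dimension and $\ccd(\bL)\leq\dim(\bL_1)$ from Fact~\ref{fact:koslie}, gives $\sum_{k\geq 2}k\cdot\dim(\bL_k)\leq 0$ and kills \emph{all} homogeneous components of degree $\geq 2$ at once. This is a clean and arguably more elementary alternative, since it avoids the sign analysis of the characteristic polynomial; note that your framing via the ``equality iff abelian'' clause of Fact~\ref{fact:koslie} is an unnecessary detour, as your contradiction argument already yields $\bL_k=0$ for $k\geq 2$ directly (just as the paper's $\bL_2=0$ suffices only because $\bL$ is generated in degree~$1$).
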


\begin{proof}
As $\caU(\bL)$ is Koszul, $\caU(\bL)$ is of type FP$_\infty$ (cf. Fact~\ref{fact:propco}(b) and \eqref{eq:kosz1}).
Thus by Fact~\ref{fact:koslie}, $\bL$ is of type FP and, therefore, $\chi$-finite (cf. Fact~\ref{fact:propco}(c)).
Hence, by Theorem~\ref{thm:grhU1}, $\bL$ is finite-dimensional.
Then
\begin{equation}
\label{eq:koslient11}
\chi_{\bL}(y)=\prod_{k\geq 1}\Phi_k(y)^{m_k}
\end{equation}
where $\Phi_k(y)$ is the $k^{th}$-cyclotomic polynomial,
and $m_k=\sum_{j\geq 1}\dim(\bL_{jk})$ (cf. Remark~\ref{rem:witt}(c)).
By Fact~\ref{fact:desrule}, $m_2=0$. Hence $\bL_2=0$, and $\bL$ is abelian.
\end{proof}


\subsection{Examples of Koszul Lie algebras}
\label{ss:exkoslie}

\subsubsection{Quadratic $1$-relator Lie algebras}
\label{sss:1rel}
Let $\euX$ be a finite set of cardinality $m\geq 2$, and let
$L\langle\euX\rangle$ denote the free $\F$-Lie algebra over the set $\euX$.
Then $L\langle\euX\rangle$ is $\N$-graded, of finite type and generated in degree $1$.
Let $\bor\in L_2\langle\euX\rangle\setminus\{0\}$ be a non-trivial homogeneous element
of degree $2$. Then putting
\begin{equation}
\label{eq:1rel1}
L\langle\,\euX\mid \bor\,\rangle=L\langle\euX\rangle/\langle\bor\rangle_{\mathrm{Lie}},
\end{equation}
where $\langle\bor\rangle_{\mathrm{Lie}}$ denotes the Lie ideal generated by $\bor$,
one has an isomorphism
\begin{equation}
\label{eq:1rel2}
\caU(L\langle\,\euX\mid \bor\,\rangle)\simeq\caU(L\langle\euX\rangle)/\langle\bor\rangle_{\mathrm{alg}},
\end{equation}
where $\langle\bor\rangle_{\mathrm{alg}}$ denotes the ideal in the associative algebra $\caU(L\langle\euX\rangle)$
generated by $\bor$. In particular, $L\langle\,\euX\mid\bor\,\rangle$ is quadratic and of finite type.
By J.~Labute's theorem (cf. \cite[Thm.~1]{labute:prop}), $\ccd(L\langle\,\euX\mid\bor\,\rangle)=2$.
Hence $L\langle\,\euX\mid\bor\,\rangle$ is a Koszul Lie algebra, 
\begin{equation}
\label{eq:1rel3}
\chi_{\bL}(y)=1-m\cdot y+y^2,
\end{equation}
and
\begin{equation}
\label{eq:1rel4}
\boh(L\langle\,\euX\mid\bor\,\rangle)=\lambda_1=\frac{1}{2}(m+\sqrt{m^2-4}),\qquad
\lambda_2=\frac{1}{2}(m-\sqrt{m^2-4});
\end{equation}
e.g., for $m=2$, $L\langle\,\euX\mid\bor\,\rangle$ is abelian. Moreover, for $\ell_k=\dim(L_k\langle\,\euX\mid\bor\,\rangle)$ one has
\begin{equation}
\label{eq:labut}
\ell_k=\frac{1}{k}\sum_{j\vert k}\mu(k/j)\sum_{0\leq i\leq j/2} (-1)^i
\frac{j}{j-i}\binom{j-i}{i}\cdot m^{j-2i}
\end{equation} 
(cf. \cite[Eq. (1)]{labut:one}). One can use \eqref{eq:labut}
to express $\bp_k(\lambda_1,\lambda_2)=\lambda_1^k+\lambda_2^k$
as
\begin{equation}
\label{eq:1rel5}
\bp_k(\lambda_1,\lambda_2)=
\sum_{0\leq i\leq k/2} (-1)^i
\frac{k}{k-i}\binom{k-i}{i}\cdot m^{k-2i}.
\end{equation}


\subsubsection{Right-angled Artin Lie algebras}
\label{sss:RAL}
Let $\Gamma=(\euX,\caE)$ be a finite loop-free graph with unoriented edges,
i.e., $|\euX|<\infty$ and $\caE\subseteq\caP_2(\euX)$, where
$\caP_2(\euX)$ is the set of subsets of $\euX$ of cardinality $2$. Then
\begin{equation}
\label{eq:RAL1}
L\langle\Gamma\rangle=L\langle\,\euX\mid xy-yx,\ \{x,y\}\in\caE\,\rangle
\end{equation}
will be called the {\it right-angled Artin Lie algebra} associated with $\Gamma$.
Moreover,
\begin{equation}
\label{eq:RAL2}
\caU(L\langle\Gamma\rangle)\simeq A\langle\Gamma\rangle=
\F\langle\euX\rangle/\langle\, xy-yx,\ \{x,y\}\in\caE\,\rangle_{\mathrm{alg}},
\end{equation}
where $\F\langle\euX\rangle$ denotes the free associative algebra over the set $\euX$.
Thus $L\langle\Gamma\rangle$ is quadratic and of finite type, and, by 
R.~Fr\"oberg's theorem (cf. \cite{froe:detpoin}), $L\langle\Gamma\rangle$ is Koszul.
For a graph $\Gamma_\circ=(\euX_\circ,\caE_\circ)$ let
the {\it exterior algebra associated with $\Gamma_\circ$} be given by
\begin{equation}
\label{eq:RAL3}
\Lambda\langle\Gamma_\circ\rangle=\Lambda\langle\euX_\circ\rangle/\langle\, x\wedge y\mid (x,y)\in\caE_\circ\,\rangle_{\mathrm{alg}},
\end{equation}
where $\Lambda\langle\euX_\circ\rangle$ denotes the free exterior algebra over the set $\euX_\circ$.
Then, by \eqref{eq:kosz1}, 
\begin{equation}
\label{eq:RAL4}
H^\bullet(L\langle\Gamma\rangle,\F)\simeq\Lambda\langle\Gamma^{\op}\rangle,
\end{equation}
where $\Gamma^{\op}=(\euX,\caP_2(\euX)\setminus \caE)$ for $\Gamma=(\euX,\caE)$.
In particular,
\begin{equation}
\label{eq:RAL5}
\chi_{L\langle\Gamma\rangle}(y)=\Cl_{\Gamma}(y)=1+\sum_{1\leq k\leq n} (-1)^k\cdot c_k(\Gamma)\cdot y^k,
\end{equation}
where $c_k(\Gamma)$ denotes the number of {\it $k$-cliques} (= complete subgraphs with $k$ vertices) 
in the graph $\Gamma$,
and $n=\ccd(L\langle\Gamma\rangle)$ coincides with the {\it clique number} of $\Gamma$, i.e.,
$\chi_{L\langle\Gamma\rangle}(-y)$ coincides with the {\it clique polynomial} of $\Gamma$ which is equal to the
{\it independence polynomial} of $\Gamma^{\op}$. 
Therefore, $\Cl_\Gamma(y)$ will be called the {\it alternating clique polynomial} of $\Gamma$.
Thus applying Corollary~\ref{cor:golod} for a
triangle-free graph $\Gamma$, one obtains Mantel's theorem. Moreover, P.~Tur\'an's theorem (cf. \cite{turan:gr}) 
shows that Question~\ref{ques:turan} has an affirmative answer for 
{\it right-angled Artin algebras} $A\langle\Gamma\rangle$ (cf.~\eqref{eq:RAL2}).

\begin{example}
\label{ex:RAL}
\noindent
(a) If $\Gamma=(\euX,\emptyset)$ has no edges, $L\langle\Gamma\rangle$ is the 
free Lie algebra over the set $\euX$, and
$\chi_{L\langle\Gamma\rangle}(y)=1-v\cdot y$, where $v=|\euX|$.

\noindent
(b) If $\Gamma=(\euX,\caP_2(\euX))$ is the complete graph on $v=|\euX|$ vertices, $L\langle\Gamma\rangle$
is abelian of dimension $v$, and $\chi_{L\langle\Gamma\rangle}(y)=(1-y)^v$.

\noindent
(c) If $\Gamma=(\euX,\caE)$ is a finite tree, $e=|\caE|$, 
one has $\chi_{L\langle\Gamma\rangle}(y)=(1-e\cdot y)(1-y)$ (cf. \cite[\S I.2, Prop.~12]{ser:trees}). 
In particular, $L\langle\Gamma\rangle$ has Euler-Poincar\'e characteristic equal to $0$
(cf. Rem.~\ref{rem:FP}).

\noindent
(d) The following example was communicated to the author by P.~Spiga.
Let $\Gamma=\Gamma(C,S)$ denote the Cayley graph for the finite cyclic group $C=\Z/11\Z$,
and let $S$ be the symmetric generating system $S=\{\,\pm 2,\pm 3,\pm 5\,\}$.
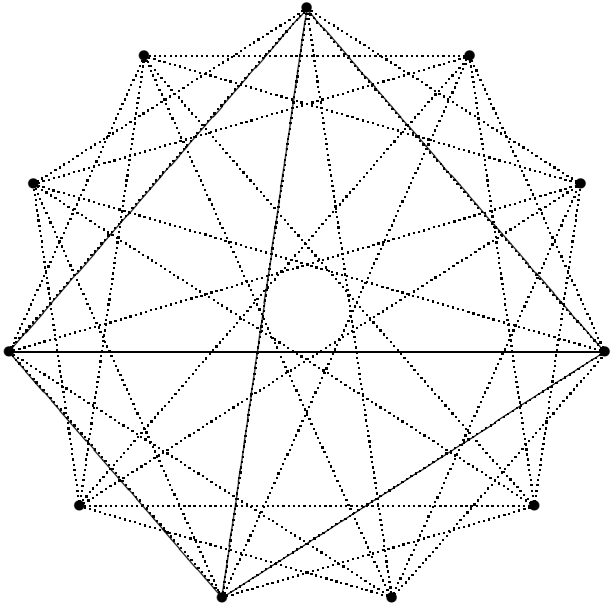
\begin{figure}[h]
$\begin{xy} 0;<4cm,0cm>:
(0,1)*=0{\bullet}="0" , 
(0.5406,0.8413)*=0{\bullet}="1",
(0.9096,0.4154)*=0{\bullet}="2",
(0.9898,-0.1423)*=0{\bullet}="3",
(0.7557,-0.6549)*=0{\bullet}="4",
(0.2817,-0.9595)*=0{\bullet} ="5",
(-0.2817,-0.9595)*=0{\bullet} ="6",
(-0.7557,-0.6549)*=0{\bullet} ="7",
(-0.9898,-0.1423)*=0{\bullet}="8",
(-0.9096,0.4154)*=0{\bullet}="9",
(-0.5406,0.8413)*=0{\bullet}="10"
\ar@{..} "0";"2",
\ar@{..} "1";"3",
\ar@{..} "2";"4",
\ar@{..} "3";"5",
\ar@{..} "4";"6",
\ar@{..} "5";"7",
\ar@{..} "6";"8",
\ar@{..} "7";"9",
\ar@{..} "8";"10",
\ar@{..} "9";"0",
\ar@{..} "10";"1",
\ar@{..} "0";"3",
\ar@{..} "1";"4",
\ar@{..} "2";"5",
\ar@{..} "3";"6",
\ar@{..} "4";"7",
\ar@{..} "5";"8",
\ar@{..} "6";"9",
\ar@{..} "7";"10",
\ar@{..} "8";"0",
\ar@{..} "9";"1",
\ar@{..} "10";"2",
\ar@{..} "0";"5",
\ar@{..} "1";"6",
\ar@{..} "2";"7",
\ar@{..} "3";"8",
\ar@{..} "4";"9",
\ar@{..} "5";"10",
\ar@{..} "6";"0",
\ar@{..} "7";"1",
\ar@{..} "8";"2",
\ar@{..} "9";"3",
\ar@{..} "10";"4"
\ar@{-} "0";"3",
\ar@{-} "3";"6",
\ar@{-} "6";"8",
\ar@{-} "8";"0",
\ar@{-} "8";"3",
\ar@{-} "6";"0"
\end{xy}$
\caption{Spiga graph}
\end{figure}
Then $\Gamma$ has clique number 4 and $\chi_{L\langle\Gamma\rangle}(y)=1-11y+33y^2-33y^3+11y^4$.
Moreover, a numerical computation of the eigenvalues shows that
\begin{equation}
\label{eq:eigenSpiga}
\begin{aligned}
\boh(L\langle\Gamma\rangle)=\lambda_1&=6.85317,\qquad&
\lambda_3&=0.751697+0.205541\,i,\\
\lambda_2&=2.64361,&
\lambda_4&=0.751697-0.205541\,i.
\end{aligned}
\end{equation}
In particular, not all eigenvalues of $L\langle\Gamma\rangle$ are real.
\end{example}

\begin{rem}
\label{rem:real}
It was shown in \cite{chsey:indep} that if $\Gamma^{\op}$ is {\it claw-free},
then all eigenvalues of $\Cl_\Gamma(y)$ are real (and positive).
Nevertheless, characterizing the finite graphs $\Gamma$ for which all eigenvalues of $\Cl_\Gamma(y)$
are real seem to be an extremely difficult problem.
\end{rem}

Let
$G_{\Gamma}=\langle\,\euX\mid xyx^{-1}y^{-1},\ \{x,y\}\in\caE\,\rangle$
denote the {\it right-angled Artin group} associated with $\Gamma$,
and let $\gr_\bullet(G_\Gamma)$ denote the graded $\Z$-Lie algebra
associated with the {\it lower central series} of $G_\Gamma$, i.e.,
$\gr_k(G_\Gamma)=\gamma_k(G_\Gamma)/\gamma_{k+1}(G_\Gamma)$,
where $\gamma_1(G_\Gamma)=G_\Gamma$ and 
$\gamma_{k+1}(G_\Gamma)=[G_\Gamma,\gamma_k(G_\Gamma)]$ for $k\geq 1$.
Then $\gr_k(G_\Gamma)$ is a torsion-free abelian group, and
$L\langle \Gamma\rangle\simeq \F\otimes_{\Z} \gr_\bullet(G_\Gamma)$
(cf. \cite{wade:RAAG}). In particular,
\begin{equation}
\label{eq:RAAG4}
\rk_{\Z}(\gr_k(G_\Gamma))=\textstyle{\sum_{1\leq j\leq n} M_k(\lambda_j),}
\end{equation}
where $n$ is the clique number of $\Gamma$ and $\lambda_1,\ldots,\lambda_n\in\C$ are the eigenvalues
of the alternating clique polynomial, i.e.,
$\Cl_\Gamma(y)=\prod_{1\leq j\leq n} (1-\lambda_j y)$.

\subsubsection{Holonomy Lie algebras of supersolvable hyperplane arrangements}
\label{sss:holo}
Let $X$ be a connected topological space having the homotopy type
of a finite cell complex, and let 
\begin{equation}
\label{eq:holo1}
\alpha_2\colon H_2(X,\F)\to\Lambda_1(H_1(X,\F))
\end{equation} 
denote
the mapping induced by the co-multiplication in $H_\bullet(X,\F)$. Then
\begin{equation}
\label{eq:holo}
L^X=L[H_1(X,\F)]/\langle\image(\alpha_2)\rangle_{\mathrm{Lie}},
\end{equation}
where $L[H_1(X,\F)]$ is the free Lie algebra over the vector space $H_1(X,\F)$
and $\image(\alpha_2)$ is identified with the corresponding subspace in $L_2[H_1(X,\F)]$,
is called the {\it holonomy Lie algebra} associated with $X$.
By definition, $L^X$ is of finite type and quadratic.

In case that $\{\,H_i\mid 1\leq i\leq s\,\}$ is a finite set of hyperplanes in the
complex vector space $\C^r$ and $X=\C^r\setminus\bigcup_{1\leq i\leq s} H_i$,
T.~Kohno has shown in \cite{kohno:holo1}
that $L^X_{\C}$ and $\C\otimes_{\Z}\gr_\bullet(\pi_1(X,x_0))$ are canonically isomorphic.
He also gave a presentation of the quadratic algebra $L^X_{\C}$.
If $\{\,H_i\mid 1\leq i\leq s\,\}$, $H_i\subseteq\C^{n+1}$ are the hyperplanes
associated with the rootsystem of type $A_n$, he showed in \cite{kohno:holo2}
that $L^X_\C$ is a Koszul Lie algebra
of cohomological dimension $n$ with eigenvalues $1$,\ldots,$n$.
In \cite{shyu:kosz}, B.~Shelton and S.~Yuzvinski extended his result to all
supersoluble hyperplane arrangements. 

\providecommand{\bysame}{\leavevmode\hbox to3em{\hrulefill}\thinspace}
\providecommand{\MR}{\relax\ifhmode\unskip\space\fi MR }
\providecommand{\MRhref}[2]{%
  \href{http://www.ams.org/mathscinet-getitem?mr=#1}{#2}
}
\providecommand{\href}[2]{#2}

\end{document}